\def\ep{{\varepsilon}}
\def\R{\mathbb R}
\def\I{\mathcal I}
\newtheorem{theo}{\textbf{Theorem}}[section]
\newtheorem{lem}[theo]{\textbf{Lemma}}
\newtheorem{prop}[theo]{\textbf{Proposition}}
\newtheorem{cla}[theo]{\textbf{Claim}}
\newtheorem{cor}[theo]{\textbf{Corollary}}
\newtheorem{defi}[theo]{\textbf{Definition}}
\newtheorem{assumption}[theo]{\textbf{Assumption}}
\newtheorem{rem}[theo]{\textbf{Remark}}
\definecolor{cya}{RGB}   {0,   174,   240 }
\title{Propagation phenomena in monostable integro-differential equations:  acceleration or not?}
\date{}
\begin{document}

\maketitle
\begin{center}
{\large\bf Matthieu Alfaro \footnote{ IMAG, Universit\'e de
Montpellier, CC051, Place Eug\`ene Bataillon, 34095 Montpellier
Cedex 5, France. E-mail: matthieu.alfaro@umontpellier.fr} and
J{\'e}r{\^o}me Coville \footnote{Equipe BIOSP, INRA Avignon,
Domaine Saint Paul, Site Agroparc, 84914 Avignon Cedex 9, France.
E-mail: jerome.coville@inra.fr}.}\\
[2ex]
\end{center}



\tableofcontents

\vspace{10pt}

\begin{abstract}
We consider the homogeneous integro-differential equation
$\partial _t u=J*u-u+f(u)$ with a monostable nonlinearity $f$. Our interest is twofold: we investigate the existence/non existence of travelling waves, and  the propagation properties of the Cauchy problem.

When the dispersion kernel $J$ is exponentially bounded, travelling waves are known to exist and solutions of the Cauchy problem typically propagate at a constant speed \cite{Schumacher1980}, \cite{Weinberger1982}, \cite{Carr2004}, \cite{Coville2007a}, \cite{Coville2008a}, \cite{Yagisita2009}. 
On the other hand, when the dispersion kernel $J$ has heavy tails and  the  nonlinearity $f$ is non degenerate, i.e $f'(0)>0$,  travelling waves do not exist and  solutions of the Cauchy problem  propagate by accelerating \cite{Medlock2003}, \cite{Yagisita2009}, \cite{Garnier2011}.  For a general monostable nonlinearity, a dichotomy between these two types of propagation behavior is still not known. 

 The originality of our work is to provide such dichotomy by  studying the interplay  between the tails of the dispersion kernel and the Allee  effect induced by the degeneracy of $f$, i.e. $f'(0)=0$. First, for algebraic decaying kernels, we prove the exact separation between existence and non existence of travelling waves. This in turn provides the exact separation between non acceleration and acceleration in the Cauchy problem.  In the latter case, we provide a first estimate of the position of the level sets of the solution.

\end{abstract}

\maketitle

\section{Introduction} \label{s:intro}

In this work, we are interested in the propagation phenomena for solutions $u(t,x)$ of homogeneous  integro-differential equations of the form
\begin{equation}\label{eq}
\partial _t u=J* u-u+f(u), \quad t>0,\, x\in \R.
\end{equation}
In population dynamics models, $u$ stands for a population density, the nonlinearity $f$ encodes the demographic assumptions and $J$ is a nonnegative dispersal  kernel of total mass 1, allowing to take into account long distance dispersal events. Here, we consider  nonlinearities $f$  of the monostable type, namely $f(0)=f(1)=0$ and $f>0$ on $(0,1)$. Precise assumptions on $J$ (heavy tails) and $f$ (degeneracy at 0) will be given later one.
\medskip

When $f$ is non degenerate at $0$, that is $f'(0)>0$, it is known that  the equation \eqref{eq} exhibits some propagation phenomena: starting with some nonnegative nontrivial compactly supported initial data, the corresponding solution $u(t,x)$ converges to $1$, its stable steady state,  at large time and locally uniformly in space. This is referred as the {\it hair trigger effect} \cite{Aronson1978}. Moreover, in many cases, the convergence to 1 can be precisely characterised. For example, when $f$ is a KPP nonlinearity ---meaning $f(s)\leq f'(0)s$ for all $s\in(0,1)$--- and $J$ is exponentially bounded, that is
\begin{equation} \label{mollison}
\exists \lambda>0, \quad \int_{\R}J(z)e^{\lambda |z|}\,dz <+\infty,
\end{equation}
equation \eqref{eq} admits travelling waves whose minimal speed $c^*$   completely characterises the convergence $u(t,x) \to 1$, see \cite{Schumacher1980}, \cite{Weinberger1982}, \cite{Carr2004}, \cite{Coville2007a}, \cite{Coville2008a}.

For non degenerate monostable nonlinearities $f$, when the condition  \eqref{mollison} is relaxed,  allowing dispersion kernels with {\it heavy tails},  a new propagation phenomena appears: {\it acceleration}. This phenomenon
for equation \eqref{eq} was first heuristically obtained by Medlock and Kot \cite{Medlock2003} and mathematically described in \cite{Yagisita2009},
\cite{Garnier2011}:  Yagisita \cite{Yagisita2009} proves the non existence of travelling waves, and Garnier \cite{Garnier2011}  studies the acceleration  in the Cauchy problem. 

\begin{rem}
Acceleration phenomena for positive solutions of a Cauchy problem also appear  in other contexts ranging from standard reaction diffusion equations \cite{Hamel2010b}, \cite{Alfaro2015a}, to homogeneous equations involving fractional operators \cite{Cabre2013}. Let us also mention that acceleration phenomenon also appears in some porous media equations \cite{King2003}, \cite{Stan2014}. 
\end{rem}

To capture this acceleration phenomenon, a precise description  of the behavior of the level sets of $u(t,x)$ is  required. More precisely,  for $\lambda\in (0,1)$, let $E_\lambda(t)$ denote the  set 
$$
E_\lambda(t):=\{x\in \R: u(t,x)=\lambda\}.
$$
Then the acceleration  can be characterised through the properties of  $x^{\pm}_\lambda(t)$ representing the ``largest'' and the ``smallest'' element of $E_{\lambda}(t)$, i.e $x^+_{\lambda}(t)=\sup E_\lambda(t)$ and $x^-_{\lambda}(t)=\inf E_\lambda(t) $. 

For example, when $f$ is a KPP  nonlinearity and $J(z)\sim \frac{C}{|z|^{\alpha}}$ ($\alpha>2$) for large $z$, 
the results of Garnier \cite{Garnier2011} assert that, for a solution of the Cauchy problem \eqref{eq} with a nonnegative compactly supported initial data, the points $x_\lambda^{\pm}(t)$   move exponentially fast at large time:  for any $\lambda \in (0,1)$ and $\ep>0$, there exists $\rho>f'(0)$ and $T_{\lambda,\ep}>0$ such that 
$$
e^{\frac{(f'(0)-\ep)}{\alpha}t} \le |x_{\lambda}^{\pm}(t)|\le e^{\frac{\rho}{\alpha} t},\; \forall t\ge T_{\lambda,\ep}.
$$
Similarly, Garnier \cite{Garnier2011} shows that, when $f$ is a KPP nonlinearity and $J(z)\sim Ce^{-\beta |z|^{\alpha}}$ ($0<\alpha<1$, $\beta >0$) for large $z$, the points $x_\lambda^{\pm}(t)$  move algebraically fast at large time:  for any $\lambda \in (0,1)$ and $\ep>0$, there exists $\rho>f'(0)$ and $T_{\lambda,\ep}>0$ such that 
$$
\left(\frac{f'(0)-\ep}{\beta}\right)^{\frac 1 \alpha } t^{\frac{1}{\alpha}} \le |x_{\lambda}^{\pm}(t)|\le \left(\frac \rho \beta \right)^{\frac 1 \alpha} t^{\frac{1}{\alpha}},\; \forall t\ge T_{\lambda,\ep}.
$$
Note that the lower and upper bounds in the above estimates do not agree: tracking the level sets when acceleration occurs is a quite challenging task.

\medskip
     
From a modelling point of view, it is natural to consider  equation \eqref{eq} with a monostable nonlinearity $f$ which degenerates at 0, that is $f'(0)=0$. This corresponds to assuming that the growth of the population at low density is not exponential any longer. In particular, this assumption induces an  Allee effect  on the evolution of the population, that is the maximal rate of production of new individuals is not achieved at low density, \cite{Allee1938}, \cite{Dennis1989}, \cite{Veit1996}, \cite{Stephens1999}, \cite{Kramer2009}.

In this $f'(0)=0$ degenerate case, the characterisation of the existence of travelling waves or acceleration  in terms of the tails of $J$ is far from understood. Indeed, under condition \eqref{mollison} for the kernel $J$, travelling fronts are known to exist \cite{Coville2007a}, \cite{Coville2008a}, and the Cauchy problem typically does not lead to acceleration  \cite{Zhang2012}. But, when condition \eqref{mollison} is relaxed, the competition between heavy tails and the Allee effect has not yet been studied.

The purpose of this work is  therefore to fill the above gap in the comprehension of propagation phenomena for equation \eqref{eq},  
thus completing the  picture describing the  dichotomy between the existence of  an accelerated propagation or not. First,  in the spirit of  \cite{Gui2015} for a fractional version of \eqref{eq}, we derive an exact relation between the algebraic tails of the kernel $J$ and the behavior of $f$ near zero, which allows or not
the existence of travelling waves.  Then,  in the spirit of \cite{Alfaro2015a} for a reaction diffusion equation with Allee effect and initial datum having heavy tails, we investigate the propagation phenomenon occurring in the Cauchy problem \eqref{eq} with front like initial data. As a consequence of the travelling waves analysis, we derive the exact separation between non acceleration and acceleration. In the latter case, we give some estimates on the \lq\lq speed'' of expansion of  the  level-sets of the solution.

\section{Assumptions and main results}

Before stating our results, let us first present our assumptions on the dispersal kernel $J$ and the degenerate monostable nonlinearity $f$. 

\begin{assumption}[Dispersal  kernel for existence of waves]\label{ass:J} $J:\R :\to [0,\infty)$ is continuous, of total mass
$ \int _\R J(x)dx=1 $. We assume that there is $C>0$ such that
\begin{equation}\label{hyp-queue-gauche}
J(x)\leq \frac{C}{\vert x\vert^{\mu}}, \quad \forall x\leq -1,
\;\text{ for some } \mu >2,
\end{equation}
and
\begin{equation}\label{hyp-queue-droite}
J(x)\leq \frac{C}{x^{\alpha}}, \quad \forall x\geq 1,\; \text{ for
some } \alpha >2.
\end{equation}
\end{assumption}

Symmetry is not assumed. As will be clear in the following, the important tail is the right one. In order to prove non existence we need to assume slightly more.

\begin{assumption}[Dispersal kernel for non existence of waves]\label{ass:J-noTW} $J$ satisfies Assumption \ref{ass:J} with \eqref{hyp-queue-droite} replaced by 
\begin{equation}\label{hyp-queue-droite-noTW}
\frac{1/C}{x^{\alpha}}
\leq J(x)\leq \frac{C}{x^{\alpha}}, \quad \forall x\geq 1,\; \text{ for
some } \alpha >2.
\end{equation}
\end{assumption}

\begin{assumption}[Degenerate monostable nonlinearity]\label{ass:f} $f:\left[0,1 \right]\to [ 0,\Vert f \Vert _\infty ]$ is of the class $C^{1}$, and is of the monostable type, in the sense that
$$
f(0)=f(1)=0, \quad f>0 \quad \text{ on } (0,1).
$$
The steady state $0$ is degenerate, in the sense that
\begin{equation}\label{hyp-f-zero}
f(u)\sim r  u^{\beta}, \text{ as } u\to 0, \; \text{ for some } r>0, \beta >1,
\end{equation}
whereas the steady state 1 is stable, in the sense that
\begin{equation}\label{hyp-f-un}
f'(1)<0.
\end{equation}
\end{assumption}

The simplest example of a monostable nonlinearity involving such a degenerate Allee effect  is given by $f(u)=r u^\beta(1-u)$.

\begin{defi}[Travelling wave] A travelling wave for equation \eqref{eq} is a couple $(c,u)$ where $c\in\R$ is the speed, and $u$ is a decreasing  profile satisfying
$$
\begin{cases}
J*u-u+cu'+f(u)=0 \quad \text{ on } \R,\\
u(-\infty)=1, \quad u(\infty)=0.
\end{cases}
$$
\end{defi}

Notice that if $c\neq 0$ then it follows from the equation that the profile $u$ of a travelling wave has to be in $C^1_b(\R)$. On the other hand, if $c=0$, the situation is more tricky and, as observed in \cite{Coville2008a}, it may happen that the above travelling wave problem admits infinitely many solutions that are not continuous. 

\begin{theo}[Existence of travelling waves]\label{th:tw} Let Assumptions \ref{ass:J} and \ref{ass:f} hold. Assume
\begin{equation}\label{alpha-beta}
\beta \geq 1+ \frac 1{\alpha -2}.
\end{equation}
Then there is $c^{*}>J_1:=\int _\R yJ(y)dy$ such that for all $c\geq c^{*}$ equation \eqref{eq} admits travelling waves $(c,u)$, whereas, for all $c<c^{*}$ equation \eqref{eq} does not admit travelling wave.
\end{theo}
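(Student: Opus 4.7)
The plan is to proceed in three movements: construct travelling waves for sufficiently large $c$; define $c^*$ as the infimum of admissible speeds and prove monotonicity in $c$; and finally establish non-existence below $c^*$ together with the strict bound $c^* > J_1$.

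For the existence part, I would adapt the monotone iteration scheme used in \cite{Coville2007a, Coville2008a}. The central task is, for each sufficiently large $c$, to exhibit a continuous non-increasing super-solution with the correct algebraic tail. The asymptotic balance between $c u'(x)$ and $f(u(x)) \sim r u(x)^\beta$ dictates the decay exponent $\gamma := 1/(\beta-1)$, so I take a super-solution $\bar u$ equal to $1$ on $(-\infty,0]$ and to $\min(1, A x^{-\gamma})$ on $(0,\infty)$, with $A>0$ to be tuned. Hypothesis \eqref{alpha-beta} is equivalent to $\gamma \leq \alpha - 2$. I would split the convolution $J*\bar u - \bar u$ into a near-field piece (Taylor expansion giving the leading term $-J_1 \bar u'(x)$ plus $O(x^{-\gamma-2})$) and a far-field piece (controlled via \eqref{hyp-queue-droite}, contributing $O(x^{-(\alpha-1)})$, which is negligible compared to $x^{-(\gamma+1)}$ precisely when $\gamma \leq \alpha-2$), so as to obtain, for large $x$,
\[
J*\bar u - \bar u + c \bar u' + f(\bar u) \;\leq\; \bigl[(J_1-c)\gamma A + r A^\beta\bigr] x^{-(\gamma+1)} + \text{l.o.t.},
\]
which is non-positive for $c > J_1 + r A^{\beta-1}/\gamma$. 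Pairing $\bar u$ with a nontrivial sub-solution (for instance a small truncation of a principal eigenfunction of the localized nonlocal operator on a long interval, in the spirit of \cite{Coville2008a}), I would then run monotone iteration on truncated domains $[-n,n]$ and invoke $L^\infty$ compactness to pass to the limit, producing a travelling wave of speed $c$.

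I would then set $c^* := \inf\{c \in \R : \eqref{eq}\text{ admits a travelling wave of speed } c\}$. The preceding step gives $c^* < +\infty$; since the super-solution $\bar u$ remains a super-solution under $c \mapsto c'\ge c$, admissibility is preserved above, so the admissible set is an interval $[c^*,+\infty)$, and a Helly-type compactness argument reaches $c = c^*$. For the non-existence part, I would first show that any hypothetical travelling wave decays algebraically at $+\infty$ at exactly the rate $\gamma = 1/(\beta-1)$: the lower tail bound \eqref{hyp-queue-droite-noTW} from Assumption \ref{ass:J-noTW} yields $J*u(x) \geq C\,x^{-(\alpha-1)}$ (by integrating only over the region where $u$ is close to $1$), a bound which the equation then transfers to $u$ ruling out faster decay, while slower decay is incompatible with the integrability $\int f(u)\,dx = c < +\infty$ obtained by integrating the wave equation. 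Multiplying the wave equation by the weight $x$ and integrating (after a truncation to handle borderline integrability) would yield a moment identity schematically of the form
\[
(c - J_1)\int u\,dx \;=\; \int x\, f(u)\,dx + \text{correction terms},
\]
whose right-hand side is strictly positive after a suitable translation normalization, producing $c^* > J_1$. Non-existence for $c < c^*$ then follows from a sliding argument comparing any candidate wave with the minimal-speed wave produced above.

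The main obstacle I anticipate is the precise control of $J*\bar u - \bar u$ when $\bar u$ has only algebraic decay: the far-field contribution of order $x^{-(\alpha-1)}$ must be shown strictly subdominant to the target $x^{-(\gamma+1)}$, and the critical condition $\beta \geq 1 + 1/(\alpha-2)$ is exactly what guarantees this. A related difficulty on the non-existence side is extracting the strict sign of $c - J_1$ via a moment identity when only algebraic integrability is available, since several integrals of interest sit right on the edge of convergence and must be controlled through careful truncation with uniform-in-truncation bounds.
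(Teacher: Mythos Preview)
Your supersolution construction is essentially sound and parallels the paper's, though with a different tail exponent: you take $\gamma=1/(\beta-1)$ while the paper takes $p=\alpha-2$. Both are admissible, since the analysis there shows any $p\in[1/(\beta-1),\alpha-2]$ works; the paper's choice makes the far-field term $O(x^{-(\alpha-1)})$ exactly match the drift term, whereas yours makes the reaction term match. Two technical caveats: your $\bar u=\min(1,Ax^{-\gamma})$ is only Lipschitz, but the machinery of \cite{Coville2008a} you wish to invoke proceeds via vanishing viscosity and requires a $C^2$ supersolution satisfying $\ep w''+J*w-w+c_0w'+f(w)\le 0$ uniformly in $0\le\ep\le 1$, so you need a smooth junction (the paper uses $1-e^x$ on the left); and your near-field/far-field split hides real work---the paper needs a four-piece decomposition of $J*\bar u$ and careful treatment of each to extract the leading $J_1\bar u'$ term.

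The genuine gap is your argument for $c^*>J_1$. Your proposed moment identity, obtained by multiplying the wave equation by $x$ and integrating, cannot work as written: since $u(-\infty)=1$, the quantity $\int_\R u$ diverges, so an identity of the shape $(c-J_1)\int u=\int x\,f(u)+\cdots$ is meaningless. Even restricted to a half-line, the integral $\int_0^\infty u$ diverges whenever $\gamma=1/(\beta-1)\le 1$, i.e.\ $\beta\ge 2$, which is certainly allowed under \eqref{alpha-beta}. You also invoke the lower tail bound \eqref{hyp-queue-droite-noTW} of Assumption~\ref{ass:J-noTW} to pin down the decay rate of a wave, but Theorem~\ref{th:tw} is stated only under Assumption~\ref{ass:J}, so that bound is unavailable. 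The paper's route is much simpler and avoids all of this: integrate the wave equation over $[-R,R]$ \emph{without} the weight $x$, prove via Fubini and $|y|J(y)\in L^1(\R)$ that $\int_{-R}^R(J*u-u)\to J_1$, and conclude directly that $c-J_1=\int_\R f(u)\,dx>0$. This yields both $c^*>J_1$ and the integrability $\int u^\beta<\infty$ in one stroke, with no need for decay-rate analysis or moment weights.
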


On the one hand, for any (\lq\lq small'') $\beta >1$ (measuring the
degeneracy of $f$ in 0), one can find some (large) $\alpha$
(measuring the right tail of the kernel $J$) so that \eqref{eq}
supports the existence of travelling waves. On the other hand, for
any (\lq\lq small'') $\alpha
>2$, one can find some (large) $\beta$ so that \eqref{eq} supports the existence of travelling waves.

\begin{cor}[Kernels lighter than algebraic]\label{cor:expracine} Let Assumption \ref{ass:J} hold, with \eqref{hyp-queue-droite} replaced by: for all $\alpha >2$, there is $C_\alpha >0$ such that
\begin{equation}\label{hyp-queues-exp-lente}
J(x)\leq \frac{C_\alpha}{\vert x\vert^{\alpha}}, \quad \forall
x\geq 1.
\end{equation}
Let Assumption \ref{ass:f} hold. Then there is $c^{*}$ such that for all $c\geq c^{*}$ equation \eqref{eq} admits travelling waves $(c,u)$, whereas, for all $c<c^{*}$ equation \eqref{eq} does not admit travelling wave.
\end{cor}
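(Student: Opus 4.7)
The plan is to reduce Corollary \ref{cor:expracine} to Theorem \ref{th:tw} by an appropriate choice of the exponent $\alpha$. The hypothesis \eqref{hyp-queues-exp-lente} means that $J$ decays faster than any algebraic rate on the right, so any $\alpha>2$ can be inserted into the right tail bound at the cost of possibly increasing the multiplicative constant. Since $\beta>1$ is fixed by Assumption \ref{ass:f}, the number $\alpha_0:=2+\frac{1}{\beta-1}$ is finite, and we have the freedom to pick $\alpha$ at least this large.

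Concretely, first I would fix any $\alpha \geq 2+\frac{1}{\beta-1}$ (for instance $\alpha=2+\frac{1}{\beta-1}$ itself, or any larger value). By \eqref{hyp-queues-exp-lente} there is then a constant $C_\alpha>0$ such that $J(x)\leq C_\alpha/x^\alpha$ for all $x\geq 1$. Combined with the left-tail control \eqref{hyp-queue-gauche} inherited from Assumption \ref{ass:J}, this shows that $J$ satisfies Assumption \ref{ass:J} with this specific $\alpha$. Moreover, the choice $\alpha \geq 2+\frac{1}{\beta-1}$ rewrites exactly as condition \eqref{alpha-beta}, namely $\beta \geq 1+\frac{1}{\alpha-2}$.

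At this point all hypotheses of Theorem \ref{th:tw} are in force, and I would invoke it directly: it produces a threshold $c^*>J_1$ such that travelling waves exist if and only if $c\geq c^*$. Note that although the auxiliary exponent $\alpha$ had to be chosen, the threshold $c^*$ delivered by Theorem \ref{th:tw} is intrinsic to the pair $(J,f)$, as it is the sharp separator between existence and non existence of waves; so no ambiguity is introduced by the choice of $\alpha$.

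There is no real obstacle here: the content of the corollary is that a kernel with tails lighter than any algebraic rate automatically satisfies the algebraic matching condition \eqref{alpha-beta} for some admissible $\alpha$, no matter how weak the Allee degeneracy $\beta>1$ may be. The only thing worth noting is that the corollary in particular covers stretched exponential kernels of the form $J(x)\sim e^{-\beta|x|^\gamma}$ with $0<\gamma<1$, which are intermediate between the algebraic regime treated in Theorem \ref{th:tw} and the Mollison regime \eqref{mollison}.
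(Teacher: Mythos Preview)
Your proposal is correct and follows essentially the same approach as the paper: given $\beta>1$, pick $\alpha>2$ large enough that \eqref{alpha-beta} holds, use \eqref{hyp-queues-exp-lente} to verify Assumption \ref{ass:J} for that $\alpha$, and invoke Theorem \ref{th:tw}. The paper adds only the parenthetical remark that what is really being used is the supersolution construction of Section \ref{s:tw}; your observation that the resulting $c^*$ is intrinsic to $(J,f)$ and independent of the auxiliary choice of $\alpha$ is a welcome clarification.
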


The above result is independent on $\beta >1$ and is valid, among others, for kernels satisfying
$$
J(x)\leq C e^{-a\vert x\vert /(\ln \vert x \vert)}, \quad \forall
x\geq 2, \quad \text{ for some } a>0,
$$
or
$$
J(x)\leq C e^{-a\vert x\vert^{b}}, \quad \forall x\geq 1, \quad
\text{ for some } a>0,\, 0<b<1,
$$
for which travelling waves do not exist in the KPP case
\cite{Garnier2011}. The proof is obvious: for a given $\beta
>1$, select a large $\alpha>2$ such that \eqref{alpha-beta} holds,
and then combine \eqref{hyp-queues-exp-lente} with Theorem
\ref{th:tw} (more precisely the fact that the construction of an adequate supersolution is enough to prove the theorem, see Section \ref{s:tw}).

Similarly, for strongly degenerate monostable $f$, we have the following consequence.
 
\begin{cor}[Strongly degenerate nonlinearity]\label{cor:fzeldo} Let Assumption \ref{ass:J} hold. Let Assumption \ref{ass:f} hold, with \eqref{hyp-f-zero} replaced by: for all $\beta >1$, there is $C_\beta >0$ such that
\begin{equation}\label{hyp-f-zeldo}
\lim_{u\to 0^+}\frac{f(u)}{u^\beta}\leq C_\beta.
\end{equation}
 Then there is $c^{*}$ such that for all $c\geq c^{*}$ equation \eqref{eq} admits travelling waves $(c,u)$, whereas, for all $c<c^{*}$ equation \eqref{eq} does not admit travelling wave.
\end{cor}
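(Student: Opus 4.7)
The plan is to reduce Corollary \ref{cor:fzeldo} to Theorem \ref{th:tw} by exploiting the strong degeneracy of $f$, in direct analogy with the reduction used for Corollary \ref{cor:expracine} but with the roles of $\alpha$ and $\beta$ interchanged. Now the kernel tail exponent $\alpha>2$ is fixed by Assumption \ref{ass:J}, and we use the freedom in $\beta$ instead: I would pick any $\beta>1$ with $\beta\geq 1+\frac{1}{\alpha-2}$, so that the compatibility condition \eqref{alpha-beta} is satisfied. Applying \eqref{hyp-f-zeldo} to this $\beta$ furnishes constants $C>0$ and $\delta\in(0,1)$ such that $f(u)\leq C\,u^{\beta}$ for all $u\in[0,\delta]$. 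This is the only consequence of the modified degeneracy hypothesis that will enter the argument.

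The main step is to invoke the existence part of Theorem \ref{th:tw}, relying on the observation already stressed after Corollary \ref{cor:expracine} that its proof only uses a local polynomial upper bound on $f$ near $0$. More precisely, the supersolution of Section \ref{s:tw}, built from the algebraic tail exponent $\alpha$ and the polynomial exponent $\beta$, depends on $f$ only through an inequality of the form $f(u)\leq C u^{\beta}$; the two-sided asymptotic $f(u)\sim r u^{\beta}$ imposed in Assumption \ref{ass:f} plays no role there. Feeding this supersolution into the monotone iteration of Section \ref{s:tw} therefore produces, for every $c\geq c^{*}$, a travelling wave $(c,u)$ for our nonlinearity $f$, with some $c^{*}>J_1$.

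For the non-existence claim, I would define $c^{*}$ as the infimum of speeds admitting a travelling wave and appeal to the standard monotonicity-in-speed property (if a wave exists at speed $c_0$, one exists at every $c\geq c_0$) to conclude that the admissible-speed set is the half-line $[c^{*},+\infty)$. The strict lower bound $c^{*}>J_1$ can then be obtained by integrating the wave equation on $\R$: since $\int_\R J=1$ and $u(-\infty)-u(+\infty)=1$, a careful regularised computation gives $c=J_1+\int_\R f(u(x))\,dx>J_1$ for every travelling wave $(c,u)$, the integral being strictly positive because $f>0$ on $(0,1)$ and $u$ sweeps through this interval. The one non-routine ingredient in the whole program is the verification that the supersolution construction of Section \ref{s:tw} really depends on $f$ only through an upper bound; once this is extracted from the proof of Theorem \ref{th:tw}, the rest is a clean reduction.
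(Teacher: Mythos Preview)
Your proposal is correct and follows essentially the same route as the paper: fix $\alpha>2$, choose $\beta\geq 1+\frac{1}{\alpha-2}$, use \eqref{hyp-f-zeldo} to secure the upper bound $f(u)\leq Cu^{\beta}$ near zero, and observe that the supersolution of Theorem~\ref{th:supersol} only consumes such an upper bound (together with $f'(1)<0$, still assumed). One small inaccuracy: Section~\ref{s:tw} does not itself run a monotone iteration; it builds the supersolution and then invokes \cite[Theorem~1.3]{Coville2008a}, which delivers the existence of the threshold $c^{*}$ and the dichotomy directly---your discussion of $c^{*}>J_1$ via integration of the wave equation is correct (and matches Lemma~\ref{lem:integrability}) but goes beyond what the corollary actually claims.
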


The above result is independent on $\alpha >2$ and is valid, among others, for the Zel'dovich nonlinearity \cite{Zeldovich1948}, \cite{Kanel1961},  that is
$$
f(u)=  re^{-\frac{1}{u}}(1-u), \quad \text{ for some } r>0.
$$
The proof is again obvious: for a given $\alpha>2$, select a large $\beta>1$ such that \eqref{alpha-beta} holds,
and then combine \eqref{hyp-f-zeldo} with Theorem
\ref{th:tw} to construct an adequate supersolution.

Next, we prove that the hyperbola separation \eqref{alpha-beta} arising in Theorem \ref{th:tw} is optimal for the  existence of travelling wave.

\begin{theo}[Non existence of travelling wave]\label{th:notw} Let Assumptions \ref{ass:J-noTW} and \ref{ass:f} hold. Assume
\begin{equation}\label{alpha-beta-notw}
\beta <1+\frac{1}{\alpha -2}.
\end{equation}
Then there is no travelling wave $(c,u)$ for  equation \eqref{eq}.
\end{theo}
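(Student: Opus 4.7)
The plan is to argue by contradiction. Suppose a travelling wave $(c,u)$ exists; I will first derive the decay estimate $u(x)\gtrsim x^{-(\alpha-2)}$ as $x\to+\infty$, and then bootstrap along the recursion $q_{n+1}=\beta q_n-1$ using the degeneracy $f(u)\sim ru^{\beta}$, until the hypothesis $\beta<1+\frac{1}{\alpha-2}$ forces $\int^{+\infty}f(u)\,dy$ to diverge.

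The first step is to integrate the travelling wave equation from $x$ to $+\infty$. Setting $\bar J(s):=\int_s^{+\infty}J$ and $g(s):=\bar J(s)-\mathbf{1}_{s<0}$, the moment assumptions $\alpha,\mu>2$ make $g\in L^1(\R)$ with $\int g=J_1$ by Fubini. One obtains
\begin{equation*}
cu(x)=(g*u)(x)+F(x),\qquad F(x):=\int_x^{+\infty}f(u(y))\,dy,
\end{equation*}
with $F$ finite since $cu-g*u$ is bounded. Splitting $g*u$ on $(-\infty,-A]\cup[-A,x]\cup[x,+\infty)$ and using, for $z\le -A$, the algebraic lower tail \eqref{hyp-queue-droite-noTW} together with $u\ge 1/2$ there yields a contribution $\ge c_0/x^{\alpha-2}$; the middle piece is non-negative since $g\ge 0$ on $[0,+\infty)$; and the right piece, where $g\le 0$, is controlled in absolute value by $Mu(x)$ via the left tail \eqref{hyp-queue-gauche} and the monotonicity of $u$. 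Combining with $F\ge 0$ gives
\begin{equation*}
(c+M)\,u(x)\ge \frac{c_0}{x^{\alpha-2}}\qquad\text{for large }x,
\end{equation*}
so that either $c+M\le 0$ is immediately absurd, or $u(x)\ge c_1/x^{\alpha-2}$ for $x$ large.

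Next comes the bootstrap. Assuming $u(y)\ge c_n/y^{q_n}$ for large $y$ with $\beta q_n>1$, the degeneracy of $f$ at $0$ gives $f(u(y))\ge (r/2)(c_n/y^{q_n})^{\beta}$ once $u(y)$ is small enough, hence $F(x)\ge c'_n/x^{\beta q_n-1}$; re-injecting into $(c+M)u(x)\ge F(x)$ improves the decay to $u(x)\ge c_{n+1}/x^{q_{n+1}}$ with $q_{n+1}=\beta q_n-1$. Writing $q^{*}:=1/(\beta-1)$, the hypothesis $\beta<1+1/(\alpha-2)$ is exactly $q_0:=\alpha-2<q^{*}$, and the identity $q_{n+1}-q^{*}=\beta(q_n-q^{*})$ with $\beta>1$ forces $q_n\to-\infty$. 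At the first $n_0$ for which $\beta q_{n_0}\le 1$, which still has $q_{n_0}>0$ since $q_{n_0-1}>1/\beta$, the bound $f(u(y))\ge c/y^{\beta q_{n_0}}$ is not integrable at $+\infty$. Hence $F(x)=+\infty$, contradicting the finiteness of $F$.

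The delicate step is the base estimate on $g*u$: one must extract the positive, slowly decaying contribution of order $x^{-(\alpha-2)}$ from the far-left region where $u$ is close to $1$, while absorbing the sign-changing contribution from the right into a term of order $u(x)$. It is here that the novel lower bound \eqref{hyp-queue-droite-noTW} of Assumption \ref{ass:J-noTW}, absent from Theorem \ref{th:tw}, plays a decisive role.
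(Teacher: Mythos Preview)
Your argument is correct, and the overall architecture---a base lower bound on the tail of $u$, followed by the bootstrap $q_{n+1}=\beta q_n-1$ until $\beta q_{n_0}\le 1$ forces $\int^\infty f(u)=+\infty$---matches the paper's strategy. The technical execution, however, is genuinely different.

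The paper obtains the base estimate (Lemma~\ref{lem:below}) and each bootstrap step (Lemma~\ref{lem:heavier}) by a \emph{comparison principle}: it computes $J*w-w$ for an explicit algebraic tail $w(x)=x^{-p}$ (Lemma~\ref{lem:mellet}), checks that $w$ is a subsolution of a suitable linear or semilinear problem on a half-line, and then uses a sliding/maximum argument to force $u\ge Kw$. The contradiction is reached via the separate integrability statement $\int_0^\infty u^\beta<\infty$ of Lemma~\ref{lem:integrability}. Because of the extra $-C_2/x^{p+1}$ term in Lemma~\ref{lem:mellet}, the paper only gets $u(x)\gtrsim x^{-(\alpha-2+\ep)}$ and must carry the $\ep$ through the iteration.

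You instead integrate the travelling-wave equation once to get the scalar identity $cu=(g*u)+F$ with $g=\bar J-\mathbf 1_{(-\infty,0)}\in L^1$, and then read off both the base estimate and the bootstrap directly from this identity: the far-left piece of $g*u$ produces the $x^{-(\alpha-2)}$ term (sharp, no $\ep$), the right piece is absorbed into $Mu(x)$ using only monotonicity of $u$, and the bootstrap is simply $(c+M)u\ge F$. This is more elementary---no subsolution constructions, no maximum principle, no separate integrability lemma---and yields the cleaner exponent $\alpha-2$ at the first step. The paper's route, on the other hand, packages the estimate on $J*w-w$ (Lemma~\ref{lem:mellet}) as a reusable tool, which the authors also deploy elsewhere. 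One small notational slip: in your description of the split you write ``for $z\le -A$'' where the running variable is the one you call $w$; the mathematics is unaffected.
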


\psset{xunit=1cm,yunit=1cm,algebraic=true}
\def\xmin{0} \def\xmax{10}
\def\ymin{0} \def\ymax{5}
\begin{center}
\begin{pspicture}(\xmin,\ymin)(\xmax,\ymax)
\psaxes[showorigin=true, axesstyle=axes]{->}(0,0)(\xmax,\ymax)
\rput(10,-0.5){\psframebox*[]{$\alpha$}}
\rput(-0.5,5){\psframebox*[]{$\beta$}}
\rput(3.5,5){\psframebox*[]{\textcolor{cya}{$\beta=1+\frac{1}{\alpha-2}$}}}
\rput(2.7,1.35){\psframebox*[]{No TW}}
\rput(6,3){\psframebox*[]{TW}}
\def\f{1+ (1/(x-2))} 
\psplot[plotpoints=1000,linecolor=cya]{2.25}{\xmax}{\f} 
\psline[linestyle=dashed, linecolor=red](\xmin,1)(\xmax,1) 
\psline[linestyle=dashed, linecolor=red](2,\ymin)(2,\ymax) 
\end{pspicture}
\end{center}

\bigskip
We now turn to the Cauchy problem \eqref{eq} with a front like initial datum $u_0$.
   
\begin{assumption}[Front like initial datum]\label{ass:u0} $u_0$ is of the class $C^{1}$, and satisfies 
\begin{itemize}
\item[(i)] $0\le u_0(x)<1, \quad \forall\, x \in \R,$
\item[(ii)] $ \liminf_{x\to -\infty}u_0 (x)>0$,
\item[(iii)] $u\equiv 0$ on $[a,\infty)$ for some $a\in \R$.
\end{itemize}
\end{assumption}

Since $f$ is Lipschitz and $0\le u_0 \le 1$ the existence of a unique local solution $u(t,x)$ to the Cauchy problem \eqref{eq} with initial datum $u_0$  is rather classical. Moreover, from the strong maximum priniciple, we know that $0<u(t,\cdot)<1$ as soon as $t>0$ and the solution is global in time.

Theorem \ref{th:tw} and Theorem \ref{th:notw}, are strong indications that, under assumption \eqref{alpha-beta},  assumption \eqref{alpha-beta-notw}, no acceleration, respectively acceleration, should occur for the solution of the Cauchy problem. In order to clearly state such a result, for any $\lambda \in(0,1)$ we define, in the spirit of the level sets used in \cite{Hamel2010b}, \cite{Garnier2011}, \cite{Alfaro2015a}, the (super) level sets of a solution $u(t,x)$ by 
$$
\Gamma_{\lambda}(t):=\{x\in \R: u(t,x)\ge\lambda\}.
$$
Also we define the \lq\lq largest'' element of $\Gamma _\lambda (t)$ by
$$
x_\lambda (t):=\sup \Gamma_\lambda(t) \in \R\cup\{-\infty,\infty\}.
$$
Notice that, for compactly supported initial datum, it may happen that the solution get extinct at large time, which is referred as the {\it quenching phenomenon} \cite{Alfaro2016}, and thus $\Gamma _\lambda (t)=\emptyset$ at large time. This is one of our motivations for considering a front like initial datum.
We can now state our first result on the Cauchy problem.

\begin{prop}[Acceleration or not in the Cauchy problem]\label{prop:level-set}
Let Assumptions \ref{ass:J-noTW} and \ref{ass:f} hold. Let $u(t,x)$ be the solution of the Cauchy problem \eqref{eq} with an initial datum $u_0$ satisfying Assumption \ref{ass:u0}. 
\begin{itemize}
\item[(i)] Assume $\beta \ge 1+\frac{1}{\alpha -2}$.
Then there is $c_0\in \R$ such that, for any $\lambda \in (0,1)$, 
\begin{equation}
\label{conclusion}
\limsup _{t\to\infty}\frac{x_{\lambda}(t)}{t}\leq c_0.
\end{equation}
\item[(ii)]
Assume $\beta <1+\frac{1}{\alpha -2}$.
Then, for any $A \in \R$,
\begin{equation}\label{invasion}
\lim_{t\to \infty}u(t,x)=1 \quad \text{ uniformly in  } (-\infty,A],
\end{equation}
and, for any $\lambda \in (0,1)$,  
\begin{equation}\label{acc-prop}
\lim_{t\to\infty}\frac{x_{\lambda}(t)}{t}=+\infty.
\end{equation} 
\end{itemize}
\end{prop}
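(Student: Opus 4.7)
The plan splits along the two cases.

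For (i), I would use a travelling wave from Theorem~\ref{th:tw} as a global supersolution. Taking the wave $(c^*, U)$ with $U$ decreasing from $1$ to $0$ supplied by that theorem, the candidate barrier is $U(x - x_0 - c^* t)$; the translate $x_0$ is chosen large enough that $u_0(x) \leq U(x - x_0)$ for every $x$, using $U(-\infty) = 1$ together with Assumption~\ref{ass:u0}(iii) and, if $u_0$ approaches $1$ at $-\infty$, a preliminary reduction to $\sup u_0 < 1$ via $\min(u_0, 1 - \varepsilon)$ together with a monotone limit at the end. The comparison principle for \eqref{eq} then gives $u(t, x) \leq U(x - x_0 - c^* t)$, so reading off the level set at height $\lambda$ produces \eqref{conclusion} with $c_0 = c^*$.

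For (ii), I first establish the uniform invasion \eqref{invasion}. Assumption~\ref{ass:u0}(ii) supplies $\delta > 0$ and $B \in \R$ with $u_0 \geq \delta$ on $(-\infty, B]$. The key ingredient is a translation-invariant local invasion lemma: there exist $\delta_0 \in (0, \delta]$ and $L_0 > 0$ such that, for every $y \in \R$ and every $M > 0$, the solution of \eqref{eq} with initial datum $\delta_0 \mathbf{1}_{[y - L_0, y + L_0]}$ exceeds $1 - \varepsilon$ on $[y - M, y + M]$ for all $t \geq T(\varepsilon, M)$, with $T$ independent of $y$. This rests on a smooth plateau-like stationary subsolution $\varphi$ of support $[-L_0, L_0]$ satisfying $J * \varphi - \varphi + f(\varphi) \geq 0$, whose existence holds under Assumption~\ref{ass:f} once $L_0$ is large enough to compensate for the degeneracy of $f$ at $0$ (a back-of-the-envelope balance gives roughly $L_0 \sim \delta_0^{-(\beta - 1)/(\alpha - 1)}$). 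Applying comparison for every $y \leq B - L_0$ and taking a union of the resulting neighbourhoods yields \eqref{invasion}.

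For the acceleration \eqref{acc-prop} I would argue by contradiction: suppose $\limsup x_\lambda(t)/t = c < \infty$ for some $\lambda$, and pick $t_n \to \infty$ realising this. The recentred functions $v_n(t, x) := u(t + t_n, x + x_\lambda(t_n))$ solve \eqref{eq}, are uniformly regular thanks to the convolution term, and satisfy $v_n(0, 0) = \lambda$ together with $v_n(0, x) \leq \lambda$ for $x \geq 0$ (by the defining sup in $x_\lambda$). Extracting a subsequential limit $v_\infty$, the invasion \eqref{invasion} combined with $x_\lambda(t_n) \leq (c + o(1)) t_n$ forces $v_\infty(t, -\infty) = 1$ for every $t$. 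A sliding-and-monotonisation step in $x$, followed by an extraction as $t \to -\infty$, should produce an entire-in-time front-like solution of speed at most $c$, i.e.\ a travelling wave in the sense of the definition preceding Theorem~\ref{th:tw}; Theorem~\ref{th:notw} then forbids this, yielding the contradiction.

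The decisive obstacle is this last extraction. Because $J$ has algebraic tails, the bound $v_n(0, x) \leq \lambda$ on $[0, \infty)$ does not immediately yield $v_\infty(0, +\infty) = 0$, so the limit $v_\infty$ is not obviously front-like to the right and need not be a genuine wave. I expect to circumvent this by iterating the recentring at successive level sets $\lambda' \in (0, \lambda)$ and exploiting Theorem~\ref{th:notw}'s exclusion of travelling waves at \emph{every} subcritical speed $c' \leq c$ to rule out each candidate limit that fails the correct right asymptotic.
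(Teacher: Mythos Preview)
Your part (i) is close to one option the paper mentions, but the reduction via $\min(u_0,1-\varepsilon)$ is backwards: replacing $u_0$ by something smaller and bounding the resulting solution from above gives no upper bound on $u$. The paper instead invokes the explicit supersolution $w$ of Theorem~\ref{th:supersol} (with speed $c_0$), and omits further details.

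For part (ii) the paper takes an entirely different and more robust route, organised around a single auxiliary result (Proposition~\ref{prop:cinfinie}): for any sequence of ignition or bistable nonlinearities $g_\theta\nearrow f$, the associated wave speeds satisfy $c_\theta\to+\infty$. This is proved by compactness \emph{at the level of wave profiles}: if $c_\theta$ stayed bounded, Helly's theorem would extract a limiting monostable wave, contradicting Theorem~\ref{th:notw}. With this in hand, invasion \eqref{invasion} is obtained by picking one bistable $g\le f$ with a positive-speed wave and invoking Chen's global stability theorem for the bistable Cauchy problem; acceleration \eqref{acc-prop} is obtained by sliding the subsolution $U_\theta(x-c_\theta t)-\theta$ under $u$ (after waiting, using invasion), which gives $\liminf_{t\to\infty} x_\lambda(t)/t\ge c_\theta$ for every small $\theta$, and then letting $\theta\to 0$.

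Both pieces of your argument for (ii) have genuine gaps. For invasion, you have not constructed the plateau subsolution $\varphi$, and even granting its existence you have not shown that the solution starting from $\delta_0\mathbf 1_{[y-L_0,y+L_0]}$ spreads to $1$; this is precisely the hair-trigger step that can fail for degenerate $f$ with compactly supported data (the quenching phenomenon, flagged explicitly in the paper and in \cite{Alfaro2016}). For acceleration, the obstacle you yourself identify is real, and your proposed fix by iterating over smaller levels $\lambda'<\lambda$ does not close it: each successive limit inherits the same defect at $+\infty$, and Theorem~\ref{th:notw} excludes only genuine decreasing profiles connecting $1$ to $0$, not entire solutions trapped between two positive constants. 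The paper avoids this difficulty by putting the compactness where the boundary conditions come for free, namely on the ignition wave profiles rather than on recentred Cauchy solutions.
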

\psset{xunit=1cm,yunit=1cm,algebraic=true}
\def\xmin{0} \def\xmax{10}
\def\ymin{0} \def\ymax{5}
\begin{center}
\scalebox{1}{\begin{pspicture}(\xmin,\ymin)(\xmax,\ymax)
\psaxes[showorigin=true, axesstyle=axes]{->}(0,0)(\xmax,\ymax)
\rput(10,-0.5){\psframebox*[]{$\alpha$}}
\rput(-0.5,5){\psframebox*[]{$\beta$}}
\rput(3.5,5){\psframebox*[]{\textcolor{cya}{$\beta=1+\frac{1}{\alpha-2}$}}}
\rput(3.2,1.35){\psframebox*[]{Acceleration}}
\rput(6,3){\psframebox*[]{Propagation at most at constant speed}}
\def\f{1+ (1/(x-2))} 
\psplot[plotpoints=1000,linecolor=cya]{2.25}{\xmax}{\f} 
\psline[linestyle=dashed, linecolor=red](\xmin,1)(\xmax,1) 
\psline[linestyle=dashed, linecolor=red](2,\ymin)(2,\ymax) 
\end{pspicture}
}
\end{center}

\medskip

In the first situation $(i)$, we recover that the level sets of the solution $u(t,x)$ move to the right at most at a constant speed. Notice that the proof of $(i)$ is rather standard (use the supersolution of Theorem \ref{th:supersol} to control the propagation in the parabolic problem as in \cite[Section 3]{Alfaro2015a}) and will be omitted. Notice also that assuming further that 
$\limsup _{x\to -\infty}u_0(x)<1$, we can use the travelling wave with minimal speed as a supersolution, and thus replace $c_0$ by $c^*$ in the conclusion \eqref{conclusion}. But, due to the lack of symmetry of the kernel  $J$, it may happen that $c^*\le 0$, see \cite{Coville2008a}. In such a case, we observe a propagation failure phenomenon for the solutions of the Cauchy problem.   

On the other hand, the first part \eqref{invasion} of $(ii)$ shows that invasion does occur (in particular $\Gamma _\lambda (t)\neq \emptyset$ at large time). Moreover the second pat \eqref{acc-prop} of $(ii)$ indicates that the level sets of the solution move by accelerating.

Our last main result aim at precising the acceleration phenomenon $(ii)$, by giving a first estimate of the actual position of $x_\lambda (t)$.

\begin{theo}[Further estimates on the acceleration phenomenon] \label{th::speedacc}
Let Assumptions \ref{ass:J-noTW} and \ref{ass:f} hold. Let $u(t,x)$ be the solution of the Cauchy problem \eqref{eq} with an initial datum $u_0$ satisfying Assumption \ref{ass:u0}. 
Assume that  
\begin{equation}\label{alpha-beta2}
\beta <1+\frac{1}{\alpha -2}.
\end{equation}
Then there exists $\overline C>0$ such that for all $\lambda\in (0,1)$, there is $T_\lambda >0$ such that 
\begin{equation}\label{upper}
x_{\lambda}(t)\le \overline Ct^{\frac{1}{(\alpha-1)(\beta-1)}+\frac{1}{\alpha -1}}, \quad \forall t\geq T_\lambda.
\end{equation}
Moreover, under the stronger assumption $\beta <1+\frac{1}{\alpha -1}$, there exists $\underline C>0$ such that for all $\lambda\in (0,1)$, there is $ T_\lambda ' >0$ such that
\begin{equation}\label{lower}
\underline Ct^{\frac{1}{(\alpha-1)(\beta-1)}}\le x_{\lambda}(t)\le \overline Ct^{\frac{1}{(\alpha-1)(\beta-1)}+\frac{1}{\alpha -1}},\quad \forall t\geq  T_\lambda'.
\end{equation}
\end{theo}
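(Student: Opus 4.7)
The proof of Theorem \ref{th::speedacc} splits into separate super-solution and sub-solution arguments for the upper bound \eqref{upper} and the lower bound in \eqref{lower}. The two bounds reflect different mechanisms: the lower-bound exponent $\frac{1}{(\alpha-1)(\beta-1)}$ comes from the time needed for the pointwise ODE $w'=f(w)\sim rw^\beta$ to lift an algebraic seed $c_0/x^{\alpha-1}$ (created by the heavy tail of $J$ applied to the front-like $u_0$) up to the level $\lambda$; the upper-bound exponent $\frac{\beta}{(\alpha-1)(\beta-1)}=\frac{1}{(\alpha-1)(\beta-1)}+\frac{1}{\alpha-1}$ arises from a combined ``dispersion + ODE'' mechanism, which accounts for the extra $1/(\alpha-1)$ summand.

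\textbf{Upper bound.} Using $f\in C^1([0,1])$ with $f(0)=f'(0)=0$ and $f\sim r u^\beta$ near $0$, there is $\tilde C>0$ with $f(u)\le \tilde C u^\beta$ on $[0,1]$, so $u$ is a subsolution of the reduced equation $v_t=J*v-v+\tilde C v^\beta$. I construct a moving-front supersolution
$$
\bar u(t,x)=\min\Bigl\{1,\,\frac{A}{(x-X(t))_+^{\alpha-1}}\Bigr\},\qquad X(t)=X_0+C_0\,t^{\beta/((\alpha-1)(\beta-1))},
$$
for suitable $A,X_0,C_0>0$. In the tail region $x>X(t)$, the supersolution inequality, after multiplication by $(x-X(t))^\alpha$, reduces to an estimate of the form $X'(t)\gtrsim (x-X(t))+A^\beta (x-X(t))^{\alpha-(\alpha-1)\beta}$; the two polynomial contributions in $(x-X(t))$ are balanced precisely by the chosen exponent of $t$ in $X(t)$. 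The convolution estimate $(J*\bar u-\bar u)(t,x)\lesssim 1/(x-X(t))^{\alpha-1}$ follows by splitting the integration into a near-diagonal part (controlled via $\int yJ(y)\,dy<\infty$) and a far-tail part (controlled via $J(y)\le C|y|^{-\alpha}$). The initial inequality $\bar u(0,\cdot)\ge u_0$ is arranged by taking $X_0$ large enough, using that $u_0\equiv 0$ on $[a,\infty)$ and $u_0\le 1$ elsewhere. Comparison then gives $u\le \bar u$ and hence the claimed upper bound via the level-set equation $\bar u(t,x)=\lambda$.

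\textbf{Lower bound.} Assume now $\beta<1+1/(\alpha-1)$, that is $(\alpha-1)(\beta-1)<1$. Since $f\ge 0$, $u$ is a supersolution of the linear $v_t=J*v-v$, whose semigroup gives
$$
u(t,x)\ge e^{-t}\bigl(u_0(x)+t(J*u_0)(x)\bigr).
$$
Combined with the front-like bound $u_0\ge c$ on $(-\infty,M]$ and the lower kernel estimate $J(y)\ge (Cy^\alpha)^{-1}$ for $y\ge 1$ (Assumption \ref{ass:J-noTW}), this yields $(J*u_0)(x)\ge c_1/x^{\alpha-1}$ for $x$ large, hence $u(1,x)\ge c_0/x^{\alpha-1}$ for all $x\ge x_0$. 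From this algebraic seed, for each $x\ge x_0$ I consider the ODE solution $w_x$ to $w'=f(w)$ with $w_x(0)=c_0/x^{\alpha-1}$; using $f(w)\ge (r/2)w^\beta$ near $0$, the explicit quadrature gives $w_x(\tau(x))=\lambda$ with $\tau(x)\sim x^{(\alpha-1)(\beta-1)}$. The function $\underline u(t,x):=w_x((t-1)_+)$ is a subsolution of \eqref{eq} provided $J*\underline u\ge\underline u$ holds pointwise, which follows, up to constants absorbable into $c_0$, from the strict convexity of $x\mapsto c_0/x^{\alpha-1}$ propagated through the monotone ODE flow, via Jensen's inequality. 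The stronger assumption $(\alpha-1)(\beta-1)<1$ is used exactly here to absorb the shift induced by the possibly nonzero first moment of $J$. Comparison then delivers $u(t,x)\ge \underline u(t,x)$, and inverting $\tau(x)\sim x^{(\alpha-1)(\beta-1)}$ gives the claimed lower bound on $x_\lambda(t)$.

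\textbf{Main obstacle.} The technical heart of the argument lies in the convolution analysis on both sides: the sharp tail estimate $(J*\bar u-\bar u)(t,x)\lesssim 1/(x-X(t))^{\alpha-1}$ for the moving-front supersolution, and the pointwise $J*\underline u\ge\underline u$ for the ODE-built subsolution. Because $J$ need not be symmetric and may have infinite second moment (when $\alpha\in(2,3]$), classical Taylor expansions are unavailable, so one must work directly with the algebraic tail bounds in Assumption \ref{ass:J-noTW}, carefully matching the bulk contribution of $\bar u$ (where $\bar u$ is close to $1$) against its algebraic decay in the tail; the more restrictive hypothesis $\beta<1+1/(\alpha-1)$ on the lower bound reflects precisely the threshold at which the convexity-based Jensen step survives the drift of $J$.
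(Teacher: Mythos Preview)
Your upper-bound construction has a genuine gap. With the translating profile $\bar u(t,x)=\min\{1,A/(x-X(t))^{\alpha-1}\}$, the plateau $\{\bar u=1\}$ forces a contribution of exact order $(x-X(t))^{-(\alpha-1)}$ in $J*\bar u-\bar u$: for $z:=x-X(t)$ large,
\[
J*\bar u(t,x)-\bar u(t,x)\;\ge\; \int_{z-A^{1/(\alpha-1)}}^{\infty}J(y)\bigl(1-\bar u(t,x)\bigr)\,dy \;\gtrsim\; \frac{1}{z^{\alpha-1}},
\]
using the lower tail bound in Assumption~\ref{ass:J-noTW}. Since $\partial_t\bar u\sim X'(t)/z^{\alpha}$, the inequality you write after multiplying by $z^{\alpha}$, namely $X'(t)\gtrsim z+A^{\beta}z^{\alpha-(\alpha-1)\beta}$, has a right-hand side that is \emph{unbounded} in $z$ (the term $z$ is always present), so no choice of $X(\cdot)$ can make it hold for all $z>0$; there is nothing to ``balance''. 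More generally, a translating profile $\min\{1,A/z^{p}\}$ can be a supersolution only when $p+1\le \alpha-1$ and $p+1\le p\beta$, i.e.\ $1/(\beta-1)\le p\le \alpha-2$, which is possible precisely when $\beta\ge 1+1/(\alpha-2)$ --- the \emph{non-acceleration} regime of Theorem~\ref{th:tw}. Under \eqref{alpha-beta2} no such $p$ exists; that is exactly why acceleration occurs. The paper circumvents this by using a \emph{non-translating, spreading} profile: one solves the ODE $\partial_t w=\gamma w^{\beta}$ from the seed $v_0(x)=x^{-p}$ with $p=(\alpha-1)/\beta$ and sets $m=\min\{1,w\}$. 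Then $\partial_t m=\gamma w^{\beta}\sim \gamma x^{-(\alpha-1)}$ has the \emph{same} decay as the plateau contribution to $J*m-m$, and a careful four-region splitting of the convolution yields $J*m-m\le (\tilde C_1+C_2)w^{\beta}$, which is absorbed by taking $\gamma$ large. The level set of $m$ then gives the exponent $\frac{1}{p(\beta-1)}=\frac{\beta}{(\alpha-1)(\beta-1)}$.

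Your lower-bound sketch is in the right spirit and close to the paper's (algebraic seed at $t=1$ via the linear semigroup, then ODE evolution), but two points need repair. First, $w_x(t)$ reaches $1$ in finite time, so $\underline u$ must be capped strictly below $1$ to remain a global subsolution; the paper does this by composing with $g(s)=s(1-Bs)$, working with $m=g(w)\le 1/(4B)$. Second, the Jensen step as stated does not quite give what is needed: one requires the one-sided bound $J*m-m\ge -C w^{\beta}$, and the paper obtains it from the explicit formulas $\partial_x w=v_0'\,v_0^{-\beta}w^{\beta}\le 0$, $\partial_{xx}w\ge 0$ together with the fundamental theorem of calculus. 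The hypothesis $(\alpha-1)(\beta-1)<1$ enters not as a drift correction but exactly in bounding $|v_0'v_0^{-\beta}|\sim x^{(\alpha-1)(\beta-1)-1}$ uniformly on $[x_B(t),\infty)$.
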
  
\psset{xunit=1cm,yunit=1cm,algebraic=true}
\def\xmin{0} \def\xmax{10}
\def\ymin{0} \def\ymax{5}
\begin{center}
\scalebox{1}{\begin{pspicture}(\xmin,\ymin)(\xmax,\ymax)
\psaxes[showorigin=true, axesstyle=axes]{->}(0,0)(\xmax,\ymax)
\rput(10,-0.5){\psframebox*[]{$\alpha$}}
\rput(-0.5,5){\psframebox*[]{$\beta$}}
\rput(3.5,5){\psframebox*[]{\textcolor{cya}{$\beta=1+\frac{1}{\alpha-2}$}}}
\rput(2.45,2.){\psframebox*[]{upper}}
\rput(3,1.65){\psframebox*[]{est.}}
\rput(3.8,1.12){\psframebox*[]{upper and lower est.}}
\def\f{1+ (1/(x-2))} 
\def\g{1+(1/(x-1))}
\psplot[plotpoints=1000,linecolor=cya]{2.25}{\xmax}{\f} 
\psplot[plotpoints=1000]{2}{\xmax}{\g}
\psline[linestyle=dashed, linecolor=red](\xmin,1)(\xmax,1) 
\psline[linestyle=dashed, linecolor=red](2,\ymin)(2,\ymax) 
\end{pspicture}
}
\end{center}

\medskip

Let us comment on the different exponents in the lower and upper estimate  of \eqref{lower}, which is valid under assumption $\beta<1+\frac{1}{\alpha-1}$. We conjecture that the correct exponent is $\frac{1}{(\alpha -1)(\beta -1)}+\frac{1}{\alpha-1}$.  To rigorously obtain the correct exponent a deeper analysis of the propagation phenomenon is needed, but this seems very involved. Indeed, the degeneracy of $f$ near zero induces a possible quenching phenomenon for the Cauchy problem. This possibility is well known for classical reaction diffusion equations \cite{Aronson1978}, \cite{Zlatos2005}, depends on $\beta$ which measures the degeneracy of $f$ at 0, and is very related to the so called {\it Fujita exponent} \cite{Fujita1966} for equation $\partial _t u=\Delta u +u^{1+p}$, $p>0$. Very recently, the Fujita exponent was  identified for the integro-differential equation $\partial _t u=J*u-u+u^{1+p}$, and the quenching phenomenon for \eqref{eq} was analyzed  \cite{Alfaro2016}.  This analysis paves the way to further studies of the acceleration in the Cauchy problem.  

\medskip

The paper is organized as follows. In Section \ref{s:tw}, we prove existence of travelling waves in the regime \eqref{alpha-beta}, that is we prove Theorem \ref{th:tw}. In Section \ref{s:notw}, we prove non existence of travelling waves in the regime \eqref{alpha-beta-notw}, that is we prove Theorem \ref{th:notw}. Last, in Section \ref{s:acceleration}, we study the acceleration phenomenon in the Cauchy problem, proving Proposition \ref{prop:level-set} $(ii)$ and Theorem \ref{th::speedacc}.

\section{Travelling waves}\label{s:tw}

In this section, we consider the regime \eqref{alpha-beta} and construct travelling waves, that is we prove  Theorem \ref{th:tw}. The main task is the contruction of a supersolution as follows.

\begin{theo}[A supersolution]\label{th:supersol} Let Assumptions \ref{ass:J} and \ref{ass:f} hold. Assume \eqref{alpha-beta}. Then we can
construct $c_0\in\R$ and a decreasing $w:\R\to (0,1)$ satisfying
$w(x)=1-e^{x}$ on  $(-\infty,-1)$, $w(x)=\frac{1}{x^{\alpha -2}}$
on  $(L,\infty)$, with $L>0$ sufficiently large, and
$$
\ep w''+J*w-w+c_0w'+f(w)\leq 0 \quad \text{ on } \R,
$$
for any $0\leq \ep \leq 1$.
\end{theo}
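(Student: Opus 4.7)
The plan is to build $w$ explicitly in three pieces (exponential on the left, algebraic on the right, with a smooth strictly decreasing bridge in between) and verify the differential inequality in each region. The hyperbola condition $\beta \ge 1 + 1/(\alpha-2)$, equivalent to $(\alpha-2)\beta \ge \alpha-1$, is tuned precisely so that in the right tail the nonlinearity $f(w) \sim rw^{\beta}$ decays no more slowly than the dominant nonlocal term $J*w - w$, which will turn out to be of order $x^{-(\alpha-1)}$, matching the scale of the drift $c_0 w'$. The bridge on $[-1, L]$ (with $L$ to be fixed large) is constructed by any $C^1$ strictly decreasing interpolation matching the values and first derivatives of the two outer pieces.

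On the left region $x \le -1$, one has $w'(x) = w''(x) = -e^x$, and since $f'(1) < 0$, $f(1 - e^x) \le 2|f'(1)| e^x$ for $x$ sufficiently negative. The identity
$$
J*w(x) - w(x) = e^x - \int J(y)\bigl[1 - w(x-y)\bigr]\,dy \le e^x
$$
reduces the supersolution inequality to $(1 - \ep - c_0 + 2|f'(1)|)e^x \le 0$, which holds uniformly for $\ep \in [0,1]$ as soon as $c_0 \ge 1 + 2|f'(1)|$.

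The heart of the matter is the right region $x \ge L$, where $w(x) = x^{-(\alpha-2)}$, $w'(x) = -(\alpha-2)x^{-(\alpha-1)}$, $w''(x) = (\alpha-2)(\alpha-1)x^{-\alpha}$, and $f(w(x)) \le 2r x^{-(\alpha-2)\beta}$. The central estimate is
$$
J*w(x) - w(x) \le K x^{-(\alpha-1)}
$$
for some constant $K$ depending only on $J$ and $\alpha$. To obtain it, one splits $\int J(y)[w(x-y) - w(x)]\,dy$ over four pieces: (i) $y \le 0$, where the mean-value bound $|w(x-y) - w(x)| \le (\alpha-2)|y|/x^{\alpha-1}$ combined with $\int |y|J(y)\,dy < \infty$ (from $\mu > 2$ and $\alpha > 2$) yields $O(x^{-(\alpha-1)})$; (ii) $0 \le y \le x/2$, where Taylor expansion gives the principal term $-w'(x)\int y J(y)\,dy = O(x^{-(\alpha-1)})$ plus a subdominant remainder; (iii) $x/2 \le y \le x+L$, where $w(x-y) \le 1$ and $J(y) \le C/y^{\alpha} \le 2^{\alpha}C/x^{\alpha}$ produce contributions subdominant to $x^{-(\alpha-1)}$ since $\alpha > 2$; and (iv) $y \ge x+L$, where $w(x-y) \le 1$ and $\int_{x+L}^{\infty} J(y)\,dy \le C'/x^{\alpha-1}$.

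Inserting this bound and factoring out $x^{-(\alpha-1)}$, the right-region inequality reduces to
$$
K - c_0(\alpha-2) + \ep\,\frac{(\alpha-2)(\alpha-1)}{x} + 2r\, x^{(\alpha-1) - (\alpha-2)\beta} \le 0,
$$
where the last term is bounded (and vanishes as $x \to \infty$ in the strict case) precisely because of the hyperbola condition. Choosing $c_0 > (K + 2r)/(\alpha-2)$ and $L$ large closes the inequality on $[L, \infty)$. On the compact middle interval, all quantities are bounded and $w' < 0$ is bounded away from zero by design, so enlarging $c_0$ once more makes the drift term dominate. The main obstacle is the careful decomposition of $J*w - w$ in the right region, in particular verifying that the jump contributions from $y \sim x$ (the cross-over band) and from the far right tail of $J$ both land at exactly scale $x^{-(\alpha-1)}$, matching the drift term; the hyperbola condition then enters as the precise threshold at which $f(w)$ decays no slower than this scale.
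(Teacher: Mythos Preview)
Your proof is correct and follows essentially the same strategy as the paper: build $w$ in three pieces, reduce the right-tail analysis to showing $J*w-w = O(x^{-(\alpha-1)})$ via a decomposition of the convolution, invoke the hyperbola condition so that $f(w)\sim r w^\beta$ decays at least as fast, then handle the left tail trivially and the compact middle by taking $c_0$ large. The paper's decomposition uses the cut points $-1,\,L,\,x-L$ (and extracts the explicit first-order term $\tfrac{p}{x^{p+1}}J_1$ via the substitution $z=y/x$), whereas you cut at $0,\,x/2,\,x+L$ and use a cruder mean-value/Taylor bound; both routes land on the same $O(x^{-(\alpha-1)})$ estimate.

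Two small imprecisions to tidy up. First, your piece (iii) is not ``subdominant'': $\int_{x/2}^{x+L} J(y)\,dy$ is of order $x^{-(\alpha-1)}$, the \emph{same} order as the drift term, so it contributes to the constant $K$ rather than being negligible (your conclusion $J*w-w\le Kx^{-(\alpha-1)}$ is unaffected). Second, the bridge on $[-1,L]$ must be at least $C^2$ (not merely $C^1$) so that $w''$ is defined and bounded there; and since your bound $f(1-e^x)\le 2|f'(1)|e^x$ only holds for $x$ sufficiently negative, the compact ``middle'' argument should cover an interval $[-M,L]$ with $M$ possibly larger than $1$. Both fixes are routine.
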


\begin{proof} Define a smooth decreasing function $w$ such that
$$
w(x):=\begin{cases}
1-e^x  &\text{ if } x\leq -1 \\
\displaystyle \frac{1}{ x^{p}} &\text{ if } x\geq L,\end{cases}
$$
where $L>1$ is chosen such that $1-e^{-1}>\frac{1}{L^p}$.  Since we want to show how the relation \eqref{alpha-beta} appears,
 we let $p>0$ free for the moment, and will chose
$p=\alpha -2$ only when it becomes necessary.

Notice that --- in view of \eqref{hyp-f-zero} and \eqref{hyp-f-un}--- we can find some large $r>0$ such that
$$
f(w)\leq rw^{\beta}(1-w)=:g(w), \quad \forall w\in
\left[0,1\right],
$$
so it enough to prove $\ep w''+J*w-w+c_0w'+g(w) \leq 0$ on $\R$.

\medskip

\noindent{\bf Supersolution for $x>>1$.}  Here, we work for $x\geq
2L$. Write
\begin{eqnarray*}
J*w(x)&=&\int _{-\infty}^{-1} \frac{J(y)}{(x-y)^p}dy+\int
_{-1}^{L}\frac{J(y)}{(x-y)^p}dy\\
&&+\int _{L}^{x-L} \frac{J(y)}{(x-y)^p}dy+\int _{x-L}^{\infty}
J(y)w(x-y)dy=:I_1+I_2+I_3+I_4.
\end{eqnarray*}
In the sequel, $\ep _0(z)$, $\ep (x)$ denote functions which tend
to 0 as $z\to 0$, $x\to\infty$ respectively, and which may change
from place to place. We estimate below the terms $I_k$, $1\leq
k\leq 4$, as $x\to\infty$.

$\bullet$ We use the change of variable $z=y/x$ in $I_1$ and get
\begin{eqnarray*}
I_1&=& \frac{1}{x^{p-1}}\int _{-\infty} ^{-1/x}\frac {J(xz)}{(1-z)^{p}}dz\\
&=& \frac{1}{x^{p-1}}\int _{-\infty} ^{-1/x}J(xz)(1+pz(1+\ep _0(z)))dz,
\end{eqnarray*}
where we notice that $\ep _0(z)$ remains bounded as $z\to
-\infty$, and that $\ep _0(z)\sim \frac{p+1}{2}z$ as $z\to 0$.  In view of the control \eqref{hyp-queue-gauche} of the
left tail of the kernel $J$, we can therefore cut into three
pieces, use the change of variable $y=xz$ in the first two terms, and get
\begin{eqnarray*}
I_1&=&\frac{1}{x^{p}} \int _{-\infty}^{-1} J(y)dy+\frac
p{x^{p+1}}\int_{-\infty}^{-1}yJ(y)dy+\frac p{x^{p-1}}\int
_{-\infty}^{-1/x} J(xz)z\ep_0(z)dz\\
&\leq & \frac{1}{x^{p}} \int _{-\infty}^{-1} J(y)dy+\frac
p{x^{p+1}}\int_{-\infty}^{-1}yJ(y)dy+ \frac {Cp}{x^{p+\mu -1}}\int
_{-\infty}^{-1/x}\frac 1{|z|^{\mu -1}}\vert \ep_0(z)\vert dz.
\end{eqnarray*}
Notice that $\frac{1}{\vert \cdot\vert ^{\mu -1}}\ep _0(\cdot) \in L^1(-\infty,-1)$ since $\mu>2$, and that $\frac 1{|z|^{\mu -1}}\vert \ep_0(z)\vert \sim \frac{p+1}{2}\frac{1}{\vert z\vert ^{\mu -2}}$ as $z\to 0$. On the one hand, if $2<\mu<3$ then  $\frac{1}{\vert \cdot\vert ^{\mu -1}}\ep _0(\cdot) \in L^1(-1,0)$, so that
$$
\int _{-\infty}^{-1/x}\frac 1{|z|^{\mu
-1}}\vert \ep_0(z)\vert dz=\ep(x)\frac 1{x^{2-\mu}}$$
 holds clearly. On the other hand, if $\mu\geq 3$ then  $\frac{1}{\vert \cdot\vert ^{\mu -1}}\ep _0(\cdot) \notin L^1(-1,0)$, so that
$$
\int _{-\infty}^{-1/x}\frac 1{|z|^{\mu
-1}}\vert \ep_0(z)\vert dz\sim \frac{p+1}{2}\int _{-\infty}^{-1/x}\frac 1{|z|^{\mu -2}}
dz=\ep(x)\frac 1{x^{2-\mu}},
$$
as $x\to\infty$. In any case, we conclude that
\begin{equation}\label{I1}
I_1\leq \frac 1{x^p}\int _{-\infty}^{-1} J(y)dy+\frac
p{x^{p+1}}\left[\int_{-\infty}^{-1}yJ(y)dy+\ep(x)\right].
\end{equation}

$\bullet$ For the term $I_2$, we use the same arguments to first
obtain
$$
I_2\leq  \frac{1}{x^{p}} \int _{-1}^L J(y)dy+\frac
p{x^{p+1}}\int_{-1}^{L}yJ(y)dy+ \frac {p}{x^{p -1}}\int
_{-1/x}^{L/x}J(xz)z\ep_0(z)dz.
$$
Next, since
$$
\int _{-1/x}^{L/x}J(xz)z\ep_0(z)dz=\frac 1{x^2}\int_{-1}^L J(y)\ep
_0\left(\frac y x\right)dy=\frac 1{x^2}\ep(x)\int _{-1}^ L J(y)dy,
$$
we conclude that
\begin{equation}\label{I2}
I_2\leq \frac 1{x^p}\int _{-1}^{L} J(y)dy+\frac
p{x^{p+1}}\left[\int_{-1}^{L}yJ(y)dy+\ep(x)\right].
\end{equation}

$\bullet$ We use the change of variable $z=y/x$ in $I_3$ and get
\begin{eqnarray}
I_3&=&
\frac{1}{x^{p-1}}\int_{L/x}^{1-L/x}\frac{J(xz)}{(1-z)^p}dz\nonumber \\
&=&\frac 1{x^{p-1}}\left[\int
_{L/x}^{1/2}\frac{J(xz)}{(1-z)^p}dz+\int
_{1/2}^{1-L/x}\frac{J(xz)}{(1-z)^p}dz\right]\nonumber \\
&=&\frac 1{x^{p-1}}\left[\int
_{L/x}^{1/2}\frac{J(xz)}{(1-z)^p}dz+\int
_{L/x}^{1/2}\frac{J(x(1-u))}{u^p}du\right].\label{derniereligne}
\end{eqnarray}
Using the same arguments as above, the first term in the bracket above is recast as
$$
\int _{L/x}^{1/2}\frac{J(xz)}{(1-z)^p}dz=\frac 1 x \int_L
^{x/2}J(y)dy+\frac p{x^2}\left[\int _L
^{x/2}yJ(y)dy+\int _L
^{x/2}yJ(y)\ep _0\left(\frac y x \right)dy\right],
$$
where $\ep _0(z)$ remains bounded as $z\to \infty$. Since $y\mapsto yJ(y) \in L^{1}(1,\infty)$ it follows from the dominated convergence theorem that $\int _L
^{x/2}yJ(y)\ep _0\left(\frac y x \right)dy\to 0$, as $x\to \infty$, so that
$$
\int _{L/x}^{1/2}\frac{J(xz)}{(1-z)^p}dz=\frac 1 x \int_L
^{x/2}J(y)dy+\frac p{x^2}\left[\int _L
^{x/2}yJ(y)dy+\ep(x)\right].
$$
For the second term in \eqref{derniereligne} we use the control \eqref{hyp-queue-droite} of
the right tail of the kernel  $J$ to collect
\begin{eqnarray*}
\int _{L/x}^{1/2}\frac{J(x(1-u))}{u^p}du&\leq& \frac
C{x^\alpha}\int _{L/x}^{1/2} \frac 1{u^p(1-u)^\alpha}du
\\
&\leq & \frac{C2^\alpha}{x^\alpha}\int _{L/x}^{1/2} \frac
1{u^p}du\\
&=& \frac{C 2^\alpha}{x^\alpha} \frac 1{x^{2-\alpha}}\ep(x)=\frac
1 {x^2}\ep(x),
\end{eqnarray*}
if we further assume that $0<p<\alpha -1$. As a result, we collect
\begin{equation}\label{I3}
 I_3\leq \frac 1 {x^p} \int_L
^{x/2}J(y)dy+\frac p{x^{p+1}}\left[\int _L
^{x/2}yJ(y)dy+\ep(x)\right].
\end{equation}

$\bullet$ For $I_4$ we use the crude estimate $w\leq 1$ and the
control \eqref{hyp-queue-droite} of the right tail of the kernel
$J$ to obtain
\begin{equation}\label{I4}
I_4 \leq \int _{x-L}  ^{\infty} J(y)dy\leq \int _{x-L}^\infty
\frac C{y^\alpha}dy=\frac{C}{\alpha -1}\frac{1}{(x-L)^{\alpha
-1}}.
\end{equation}

$\bullet$ Summing \eqref{I1}, \eqref{I2}, \eqref{I3} and
\eqref{I4} we arrive at
\begin{eqnarray*}
J*w(x)&\leq& \frac 1 {x^p} \int_{-\infty} ^{x/2}J(y)dy+\frac
p{x^{p+1}}\left[\int _{-\infty}
^{x/2}yJ(y)dy+\ep(x)\right]+\frac{C}{\alpha
-1}\frac{1}{(x-L)^{\alpha -1}}\\
&\leq & \frac 1 {x^p} +\frac
p{x^{p+1}}\left[J_1+\ep(x)\right]+\frac{C}{\alpha
-1}\frac{1}{(x-L)^{\alpha -1}},
\end{eqnarray*}
since $\int _\R J=1$, and where $J_1=\int_{\R} yJ(y)dy$. As a
consequence we have, for any $0\leq \ep \leq 1$, (recall that
$g(w)=rw^\beta(1-w)\leq rw^\beta$)
\begin{eqnarray*}
&&\ep w''(x)+J*w(x)-w(x)+c_0w'(x)+g(w(x))\\
&&\leq\frac{ p(p+1)}{x^{p+2}}   -\frac
p{x^{p+1}}(c_0-J_1+\ep(x))+\frac{C}{\alpha
-1}\frac 1{(x-L)^{\alpha
-1}}+\frac r{x^{p\beta}}\\
&&\leq -\frac p{x^{p+1}}(c_0-J_1+\ep(x))+\frac{C}{\alpha
-1}\frac
1{(x-L)^{\alpha -1}}+\frac r{x^{p\beta}}.
\end{eqnarray*}
For the right-hand side member to be nonpositive for large
positive $x$, one needs $p+1\leq \alpha -1$ and $p+1\leq p\beta$,
that is $\frac 1{\beta -1} \leq p \leq \alpha -2$. In view of
assumption \eqref{alpha-beta}, such a choice is possible and the
optimal one is $p=\alpha -2$, so that
\begin{eqnarray*}
&&\ep w''(x)+J*w(x)-w(x)+c_0w'(x)+w^\beta(x)
\\
&&\leq -\frac {\alpha -2} {x^{\alpha
-1}}\left(c_0-J_1-\frac{C}{(\alpha -2)(\alpha
-1)}\frac{1}{(1-\frac L x)^{\alpha -1}}-\frac r{\alpha -2}+\ep(x)\right).
\end{eqnarray*}
Choosing
$$
c_0>c_{right}:=J_1+\frac{C}{(\alpha-2)(\alpha
-1)}+\frac r{\alpha -2},
$$
we conclude that there is $M>2$ large enough so that, for any
$0\leq \ep \leq 1$,
$$
\ep w''(x)+J*w(x)-w(x)+c_0w'(x)+g(w(x))\leq 0,\quad \forall x\geq
M.
$$

\medskip

\noindent{\bf Supersolution for $x\leq -1$.} Here, we work for $x\leq -1$. The non degeneracy of 1
makes the analysis easy \lq\lq on the left''. Using the crude estimate $J*w\leq 1$, we get
\begin{eqnarray*}
&&\ep w''(x)+J*w(x)-w(x)+c_0w'(x)+g((w(x))\\
 &&\qquad \leq  -\ep e^x+1-(1-e^{x})-c_0e^{x}+r(1-e^{x})^{\beta}e^{x}\\
&&\qquad \leq  -e^{x}(c_0-1-r).
\end{eqnarray*}
Choosing $
c_0>c_{left}:=1+r$, we get, for any $0\leq\ep \leq 1$,
$$
\ep w''(x)+J*w(x)-w(x)+c_0 w'(x)+g(w(x))\leq 0,\quad \forall x\leq -1.
$$

\medskip

\noindent{\bf Supersolution everywhere.} We now finalize our choices. For $p=\alpha -2$, we define $w(x)$ as above. Then we choose a speed
$$
c_0>\max\left(c_{right},c_{left},c_{middle}:=\frac{(\max_{x\in\left[-1,M\right]} {w''(x)})^++1+r}{\min_{x\in\left[-1,M\right]} {-w'(x)}}\right).
$$
It follows from the above computations that, for any $0\leq \ep\leq 1$,
$$
\ep w''+J*w-w+c_0w'+g(w)\leq 0,
$$
holds true in $(M,\infty)$, $(-\infty,-1)$ but also in $\left[-1,M\right]$ thanks to the crude estimates $\ep\leq 1$, $J*w-w\leq 1$, $g(w)\leq r$ and $c_0>c_{middle}$. The theorem is proved.
\end{proof}

In view of \cite[Theorem 1.3]{Coville2008a}, the construction of a supersolution in Theorem \ref{th:supersol} is enough to ensure the existence of travelling waves. More precisely, there is $c^{*}\leq c_0$ such that for all $c\geq c^{*}$ equation \eqref{eq} admits travelling waves $(c,u)$, whereas, for all $c<c^{*}$ equation \eqref{eq} does not admit travelling wave. 

To complete the  proof of Theorem \ref{th:tw}, it remains to prove that $c^{*}>J_1$, which we do in the following a priori estimate on travelling wave. Notice that assumption \eqref{alpha-beta} is not required in Lemma \ref{lem:integrability}, so that its results remain valid in the regime \eqref{alpha-beta-notw} where there is no travelling wave, see Section \ref{s:notw}.

\begin{lem}[Speed from below and integrability property]\label{lem:integrability} Let Assumptions \ref{ass:J} and \ref{ass:f} hold. Let $(c,u)$ be a travelling wave.  Then $c>J_1=\int _\R yJ(y)dy$, and
\begin{equation}
\label{integrabilite}
\int _0 ^{+\infty}(u(x))^{\beta}dx<+\infty.
\end{equation}
\end{lem}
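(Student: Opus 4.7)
The plan is to integrate the travelling wave equation over $[-A,A]$ and pass to the limit $A\to\infty$, which will yield the identity $c = J_1 + \int_\R f(u(x))\,dx$. Both conclusions of the lemma are then immediate: the integral is strictly positive (giving $c>J_1$) and it is finite (giving the integrability of $u^\beta$ at $+\infty$).

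Concretely, integrating $J*u-u+cu'+f(u)=0$ over $[-A,A]$ yields
$$\int_{-A}^A (J*u(x)-u(x))\,dx + c\,(u(A)-u(-A)) + \int_{-A}^A f(u(x))\,dx = 0.$$
(When $c\neq 0$ the profile is $C^1$; when $c=0$ the term $cu'$ is simply absent, so continuity of $u$ is not needed.) By Fubini and the change of variable $w=x-y$,
$$\int_{-A}^A (J*u - u)\,dx = \int_\R J(y)\left[\int_{-A-y}^{-A} u(w)\,dw - \int_{A-y}^A u(w)\,dw\right]dy.$$
Using $u(-\infty)=1$ and $u(+\infty)=0$, a case analysis on the sign of $y$ shows that the bracket tends pointwise to $y$ as $A\to\infty$. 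Since $0\le u\le 1$, the bracket is uniformly bounded by $2|y|$, and the tail controls \eqref{hyp-queue-gauche}--\eqref{hyp-queue-droite} (with $\mu,\alpha>2$) ensure that $\int_\R |y|J(y)\,dy<\infty$. Dominated convergence then gives $\int_{-A}^A (J*u - u)\,dx \to J_1$, while the boundary term satisfies $c(u(A)-u(-A))\to -c$, and $\int_{-A}^A f(u)\,dx$ converges by monotone convergence. Hence
$$c = J_1 + \int_\R f(u(x))\,dx.$$

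Since $u$ is decreasing with $u(-\infty)=1$, $u(+\infty)=0$ and $f>0$ on $(0,1)$, the integral on the right is strictly positive, which proves $c>J_1$. Moreover, the integral is finite, and in view of \eqref{hyp-f-zero} there exists $M>0$ such that $f(u(x))\ge \tfrac{r}{2}u(x)^\beta$ for all $x\ge M$ (using $u(x)\to 0$ as $x\to +\infty$). Therefore
$$\int_M^{+\infty} u(x)^\beta\,dx \le \frac{2}{r}\int_M^{+\infty} f(u(x))\,dx < +\infty,$$
and combined with the boundedness of $u^\beta$ on $[0,M]$ this yields \eqref{integrabilite}.

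The delicate point is the dominated convergence step for the convolution: one must verify that the pointwise limit of the bracket is exactly $y$ in both cases $y>0$ and $y<0$ (the signed integrals over intervals receding to $-\infty$ contribute $y$, those over intervals receding to $+\infty$ contribute $0$), and that the uniform bound $2|y|$ is integrable against $J$, which is precisely where the assumption $\mu,\alpha>2$ is needed.
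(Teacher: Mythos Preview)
Your argument is correct and follows the same overall plan as the paper: integrate the travelling wave equation over $[-A,A]$, show that $\int_{-A}^A(J*u-u)\to J_1$, and deduce both conclusions from the resulting identity $c=J_1+\int_\R f(u)$.

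The difference lies in how the convolution limit is computed. The paper writes $u(x-y)-u(x)=-\int_0^1 y\,u'(x-ty)\,dt$ and applies Fubini to obtain
\[
I_R=-\int_\R yJ(y)\int_0^1\bigl(u(R-ty)-u(-R-ty)\bigr)\,dt\,dy,
\]
which requires $u\in W^{1,\infty}(\R)$; for $c=0$ the profile need not be $C^1$, so the paper inserts a mollification step to recover this formula. Your route---Fubini plus the change of variable $w=x-y$ to get the bracket $\int_{-A-y}^{-A}u-\int_{A-y}^{A}u$---uses only $u\in L^\infty(\R)$ and the limits $u(\pm\infty)$, so it covers the $c=0$ case directly without any regularisation. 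This is a genuine simplification: the dominated convergence bound $2|y|J(y)\in L^1$ (from $\mu,\alpha>2$) and the pointwise limit of the bracket are exactly as you describe, and nothing in the argument appeals to regularity of $u$. The paper's formulation, on the other hand, makes the appearance of $J_1$ slightly more transparent (the factor $y$ is visible from the start), at the cost of the extra approximation argument.
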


\begin{proof} We first claim (see below for a proof) that 
\begin{equation}
\label{claim}
 I_R:=\int _{-R}^{R}(J*u-u)(x)dx \to J_1, \quad \text { as } R\to \infty.
\end{equation}
Combining this with the travelling wave equation, $cu'\in L^1(\R)$ and $f(u)\geq 0$ we get that $f(u)\in L^1(\R)$, which in turn implies \eqref{integrabilite} since $f(u(x))\sim ru^\beta (x)$ as $x\to \infty$. Also integrating the travelling wave equation on $\R$, we find
$$
c-J_1=\int _{\R}f(u(x))dx>0,
$$
and it only remains to prove the claim  \eqref{claim}. 

Let us first assume that the wave $u$ is in $W^{1,\infty}(\R)$ --- which is in particular the case as soon as $c\neq 0$--- so that one can write
$$
I_R=\int _{-R}^R \int _\R J(y)(u(x-y)-u(x))dydx
=\int _{-R}^R \int _\R \int _0  ^{1}J(y)u'(x-ty)(-y)dtdydx.
$$
The absolute value of the integrand is bounded by $\Vert u'\Vert _\infty \vert y\vert J(y)$ which belongs to $L^{1}(\R)$, so that Fubini's theorem yields
$$
I_R=-\int _\R yJ(y)\int _0 ^{1}\int _{-R}^{R}u'(x-ty)dxdtdy,
$$
and thus
\begin{equation}\label{egalite}
I_R=-\int _\R yJ(y)\int _0 ^{1}(u(R-ty)-u(-R-ty))dtdy.
\end{equation}
Now the boundary conditions $u(\infty)=0$, $u(-\infty)=1$ and the dominated convergence theorem yields \eqref{claim}. 

It therefore only remains to consider the $c=0$ case, for which we only know $u\in L^\infty(\R)$. We use a mollifying argument. Let $(\rho _n)_{n\geq 0}$ be a sequence of mollifiers and define $u_n:=\rho _n *u\in C^\infty(\R)$, so that $\Vert u_n\Vert _\infty \leq \Vert u\Vert _\infty =1$. Up to an extraction, $u_n\to u$ almost everywhere on  $\R$, and, by the dominated convergence theorem, $
J*u_n\to J*u$ on  $\R$. Using again the dominated convergence theorem, we see that
$$
I_R^{n}:=\int _{-R}^{R}(J*u_n-u_n)(x)dx\to I_R, \; \text{ as } n\to \infty.
$$
On the other hand, for a given $n\geq 0$, $u_n\in W^{1,\infty}(\R)$ so that equality \eqref{egalite} applies  to $u_n$ and
\begin{eqnarray*}
I_R^{n}&=&-\int _\R yJ(y)\int _0 ^{1}(u_n(R-ty)-u_n(-R-ty))dtdy\\
 &&\to -\int _\R yJ(y)\int _0 ^{1}(u(R-ty)-u(-R-ty))dtdy,\; \text{ as } n\to \infty.
\end{eqnarray*}
{}From the above $n\to \infty$ limits of $I_R^n$, we get that equality \eqref{egalite} is still true in this $c=0$ case, and we conclude as above.
\end{proof}

\section{Non existence of travelling wave}\label{s:notw}

In this section, we consider the regime \eqref{alpha-beta-notw} and prove non existence of travelling wave, that is we prove  Theorem  \ref{th:notw}. 

We begin with an estimate of the nonlocal diffusion term for an algebraic tail which will then serve, twice, as a subsolution. This estimate is in the spirit of \cite{Mellet2014}, where the nonlocal diffusion operator is the fractional Laplacian. Nevertheless, since our nonlocal diffusion operator does not share the homogeneity property (allowed by some singularity in zero) of the fractional Laplacian, we need to deal with an additional bad negative term in \eqref{tail-nonlocal}.

\begin{lem}[Estimate for an algebraic tail]\label{lem:mellet} Let Assumption \ref{ass:J-noTW} hold. For $p>0$, let us define
 $$
w(x):=\begin{cases}
1  &\text{ if } x<1 \\
\displaystyle \frac{1}{ x^{p}} &\text{ if } x\geq 1.\end{cases}
$$
Then there are constants $C_1>0$ and $C_2>0$ such that
\begin{equation}\label{tail-nonlocal}
J*w(x)-w(x)\geq \frac{C_1}{x^{\alpha-1}}-\frac{C_2}{x^{p+1}}+O\left(\frac{1}{x^{p+\alpha -1}}\right), \quad \text{ as } x\to +\infty.
\end{equation}
\end{lem}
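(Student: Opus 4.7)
My plan is to write, for large $x$,
\begin{equation*}
J*w(x) - w(x) = \int_\R J(y)\bigl[w(x-y) - w(x)\bigr]\,dy
\end{equation*}
with $w(x)=1/x^p$ on $(1,\infty)$, and split the integral at $y=x-1$, the breakpoint where $w(x-y)$ switches from $1/(x-y)^p$ to the constant value $1$. The two resulting regions will respectively produce the leading $C_1/x^{\alpha-1}$ contribution from the heavy tail and the diffusion-like $O(1/x^{p+1})$ term.

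For the right region $y>x-1$, I would observe that the integrand equals $J(y)(1-1/x^p)\geq\tfrac{1}{2}J(y)$ once $x$ is sufficiently large, and then invoke the lower bound $J(y)\geq y^{-\alpha}/C$ from Assumption \ref{ass:J-noTW} to obtain
\begin{equation*}
\int_{x-1}^{\infty} J(y)\bigl[w(x-y)-w(x)\bigr]\,dy \geq \frac{1}{2C(\alpha-1)(x-1)^{\alpha-1}} \geq \frac{C_1}{x^{\alpha-1}}
\end{equation*}
for some $C_1>0$, which supplies the leading positive term.

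For the left region $y\leq x-1$, the key tool I would use is the elementary convexity inequality
\begin{equation*}
\frac{1}{(1-t)^p} - 1 \geq pt \qquad \text{for all } t<1,
\end{equation*}
a direct consequence of the fact that $t\mapsto (1-t)^{-p}-1-pt$ vanishes with its derivative at $0$ and has positive second derivative on $(-\infty,1)$. Applied to $t=y/x\leq (x-1)/x<1$, it delivers the clean pointwise bound $w(x-y)-w(x)\geq py/x^{p+1}$, hence
\begin{equation*}
\int_{-\infty}^{x-1} J(y)\bigl[w(x-y)-w(x)\bigr]\,dy \geq \frac{p}{x^{p+1}}\int_{-\infty}^{x-1} yJ(y)\,dy.
\end{equation*}
Then I would use that $\mu>2$ and $\alpha>2$ in Assumption \ref{ass:J-noTW} make $|y|J(y)\in L^1(\R)$ and give $\int_{x-1}^\infty yJ(y)\,dy = O(x^{-(\alpha-2)})$, whence $\int_{-\infty}^{x-1} yJ(y)\,dy = J_1 + O(x^{-(\alpha-2)})$; the left region thus contributes at least $pJ_1/x^{p+1} + O(x^{-(p+\alpha-1)})$. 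Choosing $C_2 := \max(1,p|J_1|)>0$ and summing the two regional estimates yields the claimed bound. The only real subtlety is the convexity inequality, which elegantly replaces the multi-term Taylor expansion used in Theorem \ref{th:supersol} by a single global one-sided bound valid all the way up to $y=x-1$; everything else reduces to routine tail bookkeeping based on $\alpha,\mu>2$.
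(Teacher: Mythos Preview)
Your proof is correct, and it follows the same overall skeleton as the paper --- split $J*w(x)-w(x)$ at the breakpoint $y=x-1$, extract $C_1/x^{\alpha-1}$ from the tail region $y>x-1$ using the lower bound in \eqref{hyp-queue-droite-noTW}, and show the region $y\leq x-1$ contributes at least $-C_2/x^{p+1}$ up to an $O(x^{-(p+\alpha-1)})$ error.

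The genuine difference lies in how the left region is handled. The paper performs the change of variable $y=xz$ and then splits $(-\infty,x-1]$ into three subintervals $(-\infty,-1)$, $(-1,1)$, $(1,x-1)$: on the first it uses the left-tail bound \eqref{hyp-queue-gauche} to show the contribution is $O(x^{-(p+1)})$; on the second it uses boundedness of $J$ and a Taylor estimate near $z=0$ to get $O(x^{-(p+1)})$; on the third it simply observes the integrand is nonnegative and discards it. Your argument replaces all three estimates by the single convexity inequality $(1-t)^{-p}-1\geq pt$ on $(-\infty,1)$, which gives the global pointwise bound $w(x-y)-w(x)\geq py/x^{p+1}$ and immediately reduces the left region to $\frac{p}{x^{p+1}}\int_{-\infty}^{x-1}yJ(y)\,dy = \frac{pJ_1}{x^{p+1}}+O(x^{-(p+\alpha-1)})$. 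This is cleaner and yields the explicit constant $C_2=p|J_1|$ (up to the cosmetic $\max$ with $1$), whereas the paper's constants are implicit. The paper's approach, on the other hand, is more modular and does not rely on the specific convexity of $s\mapsto s^{-p}$, so it would adapt more readily to profiles $w$ that are not exact powers.
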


\begin{proof} For $x\geq 2$, we write
$$
J*w(x)-w(x)=\int _{-\infty}^{x-1}J(y)\left(\frac{1}{(x-y)^{p}}-\frac{1}{x^p}\right)dy+\int _{x-1}^{+\infty}J(y)dy
+\int _{x-1}^{+\infty}J(y)\frac{-1}{x^p}dy.
$$
In view of \eqref{hyp-queue-droite-noTW}, the second integral term above is larger than $\frac{C_1}{x^{\alpha-1}}$ for some $C_1>0$, whereas the third integral term is
$O\left(\frac{1}{x^{p+\alpha -1}}\right)$ as $x\to +\infty$. In the first integral term, we perform the change of variable $y=xz$ and cut into three pieces to obtain $$
J*w(x)-w(x)=I_1+I_2+I_3+\frac{C_1}{x^{\alpha-1}}+O\left(\frac{1}{x^{p+\alpha -1}}\right),
$$
where

$\bullet$ 
$
I_1:=\frac{1}{x^{p-1}}\int_{-\infty}^{-1/x}J(xz)\left(\frac{1}{(1-z)^{p}}-1\right)dz$. In view of \eqref{hyp-queue-gauche}, we obtain
$$
\vert I_1\vert \leq \frac{C}{x^{p+\mu-1}}\int _{-\infty}^{-1/x}\frac{1}{\vert z\vert ^{\mu}}\left(1-\frac{1}{(1-z)^{p}}\right)dz.
$$
Since the above integrand is equivalent to $\frac{p}{\vert z\vert ^{\mu-1}}$ as $z\to 0$ (with $\mu -1>1$), we end up  with $\vert I_1\vert =O\left(\frac{1}{x^{p+1}}\right)$ as $x\to +\infty$.

$\bullet$
$
I_2:=\frac{1}{x^{p-1}}\int_{-1/x}^{1/x}J(xz)\left(\frac{1}{(1-z)^{p}}-1\right)dz$. Hence
$$
\vert I_2\vert \leq \frac{\Vert J\Vert _\infty}{x^{p-1}}\int _{-1/x}^{1/x}\left\vert \frac{1}{(1-z)^{p}}-1 \right\vert dz.
$$
Since the above integrand is equivalent to $p\vert z\vert$ as $z\to 0$, we end up with $\vert I_2\vert =O\left(\frac{1}{x^{p+1}}\right)$ as $x\to +\infty$.

$\bullet$
$I_3:=\frac{1}{x^{p-1}}\int_{1/x}^{1-1/x}J(xz)\left(\frac{1}{(1-z)^{p}}-1\right)dz
\geq 0$. 
 
Putting all together concludes the proof of the lemma.
\end{proof}

We can now prove below some a priori algebraic estimates of the tails of possible travelling waves. 

\begin{lem}[A priori estimates of tails from below]\label{lem:below}  Let Assumptions \ref{ass:J-noTW} and \ref{ass:f} hold. Let $\ep >0$ be given. Then for any travelling wave $(c,u)$, there is a constant $K>0$ such that
$$
 u(x)\geq \frac{K}{x^{\alpha-2+\ep}}, \text{ for all } x\geq 1.
$$
\end{lem}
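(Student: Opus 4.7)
The plan is to build an explicit algebraic subsolution of the linearized operator $L\phi+c\phi':=(J*\phi-\phi)+c\phi'$ with tail $K/x^{\alpha-2+\ep}$, and then push it below $u$ via a nonlocal maximum principle. This is natural because $u$ itself is a supersolution of $L\phi+c\phi'=0$, by virtue of the travelling wave equation $Lu+cu'=-f(u)\le 0$.

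Fixing $\ep>0$, I would set $p:=\alpha-2+\ep$ and let $w$ be the function introduced in Lemma \ref{lem:mellet}. Since $p+1=\alpha-1+\ep>\alpha-1$, the correction $-C_2/x^{p+1}$ and the remainder $O(1/x^{p+\alpha-1})$ appearing in \eqref{tail-nonlocal} are both $o(1/x^{\alpha-1})$; so is $|cw'(x)|=|c|p/x^{p+1}$. Combining these observations would yield some $X_0\ge 1$ such that
$$Lw(x)+cw'(x)\ge \frac{C_1}{4x^{\alpha-1}}>0,\qquad \forall x\ge X_0.$$

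Next I would take $0<K<\min(1,u(X_0))$ and set $v:=Kw$. Since $u$ and $v$ are both decreasing with $v\le K$ on $\R$ and $u(x)\ge u(X_0)>K$ for $x\le X_0$, the difference $\phi:=v-u$ is strictly negative on $(-\infty,X_0]$, while $\phi(x)\to 0$ as $x\to+\infty$. Supposing for contradiction that $M:=\sup_{\R}\phi>0$, this supremum would be attained at some finite $x^{*}>X_0$. By the nonlocal maximum principle, $L\phi(x^{*})\le 0$; and if $c\ne 0$ then $u\in C^1_b(\R)$, so $\phi'(x^{*})=0$. On the other hand the travelling wave equation yields
$$L\phi(x^{*})+c\phi'(x^{*})=Lv(x^{*})+cv'(x^{*})+f(u(x^{*}))\ge \frac{KC_1}{4(x^{*})^{\alpha-1}}>0,$$
a contradiction. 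Hence $v\le u$ on $\R$, and restricting to $x\ge 1$ gives $u(x)\ge K/x^{\alpha-2+\ep}$.

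The main delicate point is really the first step: verifying that the negative correction $-C_2/x^{p+1}$ in Lemma \ref{lem:mellet} and the transport term $cw'$ are absorbed by the diffusive gain $C_1/x^{\alpha-1}$. This is precisely the condition $p<\alpha-1$, i.e.\ $\ep<1$, which is why this construction only delivers the exponent $\alpha-2+\ep$ for arbitrarily small $\ep>0$ and not the critical exponent $\alpha-1$. The $c=0$ case would require a minor adaptation, since $u$ need not be $C^1$, but then the $\phi'$ terms drop out of the comparison and the nonlocal part of the argument goes through unchanged.
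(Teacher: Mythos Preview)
Your argument is correct and matches the paper's proof essentially line for line: invoke Lemma~\ref{lem:mellet} with $p=\alpha-2+\ep$ so that $w$ becomes a strict subsolution of $J*\phi-\phi+c\phi'=0$ on a right half-line, then slide $Kw$ below $u$ via a global-maximum contradiction. One small correction to your closing commentary: the condition that lets $C_1/x^{\alpha-1}$ absorb the correction $-C_2/x^{p+1}$ and the transport term is $p+1>\alpha-1$, i.e.\ $\ep>0$ (not $p<\alpha-1$, i.e.\ $\ep<1$); the critical unreachable tail exponent for this lower bound is $\alpha-2$, not $\alpha-1$.
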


\begin{proof} For a travelling wave $(c,u)$, since $f\geq 0$ on $[0,1]$, we have
\begin{equation}\label{v-super}
J*u-u+cu'\leq 0 \; \text{ on } \R.
\end{equation}

On the other hand, Lemma \ref{lem:mellet} implies that, for $A>1$ large enough, the function $
w(x):=\mathbf 1 _{(-\infty,1]}(x)+\mathbf 1 _{(1,\infty)}(x) \frac{1}{ x^{\alpha-2+\ep}}
$
satisfies
\begin{equation}\label{w-sub}
J*w-w+cw'>0 \; \text{ on } (A,\infty).
\end{equation}

Since  $w\leq 1$ and $\inf _{(-\infty,A]}u>0$, we
can select $K >0$ small enough so that  $K w(x)-u(x)<0$ for all $x\in(-\infty,A]$. Assume by contradiction that there is  $x_0>A$ such that $K w(x_0)-u(x_0)>0$. Since $Kw(x)-u(x)\to 0$ as $x\to +\infty$, the function $Kw-u$ reaches some global maximum at some point $x_1\in (A,\infty)$, which is contradicted by \eqref{v-super} and \eqref{w-sub}. This proves the lemma. \end{proof}

The next lemma is of crucial importance for the proof of non existence of waves under  assumption \eqref{alpha-beta-notw}. Roughly speaking, if the tail of a travelling wave is rather heavy then it is actually very heavy. Notice that such a trick was also used in \cite{Gui2015}. In contrast with the previous lemma, we shall need to keep a trace of the nonlinear term to improve the tail estimate. 

\begin{lem}[Making the tail heavier]\label{lem:heavier} Let Assumptions \ref{ass:J-noTW} and \ref{ass:f} hold. Let $(c,u)$ be a travelling wave. Assume that there are 
\begin{equation}\label{tantque}
\frac 1\beta <\gamma<\frac{1}{\beta-1},
\end{equation}
 and $K>0$ such that
\begin{equation}\label{rather-heavy}
u(x)\geq \frac{K}{x^{\gamma }}, \text{ for all } x\geq 1.
\end{equation}
Then, there is $M>0$ such that
$$
 u(x)\geq \frac{M}{x^{\beta\gamma -1}},  \text{ for all } x\geq 1.
$$
\end{lem}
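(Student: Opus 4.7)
The idea is to improve the lower bound on $u$ by feeding the hypothesis \eqref{rather-heavy} into the nonlinearity $f(u)\sim ru^\beta$ and then inverting the linear nonlocal operator via the algebraic estimate of Lemma \ref{lem:mellet}. The key observation is that $p:=\beta\gamma-1$ is positive (because $\gamma>1/\beta$) and strictly smaller than $\gamma$ (because $\gamma<1/(\beta-1)$), so that the new decay $1/x^{\beta\gamma-1}$ is indeed heavier than the old one $1/x^\gamma$.

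\textbf{Step 1 (a lower bound on the source term).} Since $u(+\infty)=0$ and $f(u)\sim ru^\beta$ as $u\to 0$, there exists $x_0\geq 1$ such that $f(u(x))\geq \tfrac r2 u(x)^\beta$ for every $x\geq x_0$. Combined with the hypothesis \eqref{rather-heavy}, this gives a constant $\kappa>0$ such that
\[
f(u(x))\geq \frac{\kappa}{x^{\beta\gamma}},\qquad \forall x\geq x_0.
\]
Rewriting the travelling wave equation as $J\ast u-u+cu'=-f(u)$, we obtain the inequality
\[
J\ast u(x)-u(x)+cu'(x)\leq -\frac{\kappa}{x^{\beta\gamma}},\qquad \forall x\geq x_0.
\]

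\textbf{Step 2 (subsolution of the linear operator).} Define, as in Lemma \ref{lem:mellet}, the profile $w(x)=1$ for $x<1$ and $w(x)=1/x^{p}$ for $x\geq 1$, with $p=\beta\gamma-1>0$. Since $p+1=\beta\gamma$, Lemma \ref{lem:mellet} provides
\[
J\ast w(x)-w(x)\geq \frac{C_1}{x^{\alpha-1}}-\frac{C_2}{x^{\beta\gamma}}+O\!\left(\frac{1}{x^{p+\alpha-1}}\right),\qquad x\to+\infty.
\]
Using $w'(x)=-p/x^{\beta\gamma}$ for $x>1$, the elementary bound $C_1/x^{\alpha-1}\geq 0$, and absorbing the $cw'$ term, we get a constant $C_3>0$ and some $x_1\geq x_0$ such that
\[
J\ast w(x)-w(x)+cw'(x)\geq -\frac{C_3}{x^{\beta\gamma}},\qquad \forall x\geq x_1.
\]

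\textbf{Step 3 (comparison).} Pick $M>0$ so small that
\[
M<\frac{\kappa}{2C_3}\quad \text{and}\quad M<\inf_{(-\infty,x_1]}u(x),
\]
the latter infimum being positive because $u$ is decreasing with $u(-\infty)=1$ and $u>0$ everywhere by the strong maximum principle. Denote the linear operator $Lv:=J\ast v-v+cv'$. Combining Step~1 and Step~2,
\[
L(Mw-u)(x)\geq -\frac{MC_3}{x^{\beta\gamma}}+\frac{\kappa}{x^{\beta\gamma}}+\text{h.o.t.}\geq \frac{\kappa}{2x^{\beta\gamma}}+\text{h.o.t.}>0,\qquad \forall x\geq x_2,
\]
for some $x_2\geq x_1$ large enough. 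Moreover, by the choice of $M$, $Mw-u\leq 0$ on $(-\infty,x_2]$ and $Mw-u\to 0$ as $x\to+\infty$. If $Mw-u$ were positive somewhere, it would attain a positive global maximum at some $x_\ast\in(x_2,\infty)$; at this point $(Mw-u)'(x_\ast)=0$ and $J\ast(Mw-u)(x_\ast)\leq (Mw-u)(x_\ast)$, which would force $L(Mw-u)(x_\ast)\leq 0$, contradicting the strict positivity above. Hence $u\geq Mw$ on $\mathbb R$, which yields the desired estimate with constant $M$.

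\textbf{Main obstacle.} The comparison argument in Step~3 tacitly uses $u\in C^1(\mathbb R)$, which holds whenever $c\neq 0$. For the delicate case $c=0$, the profile $u$ might only be $L^\infty$ and monotone, so one must regularise by a mollifying sequence $u_n:=\rho_n\ast u$ and pass to the limit, exactly in the spirit of the $c=0$ part of Lemma \ref{lem:integrability}; the integrand $(Mw-u_n)$ is smooth and the maximum principle argument applies to it, with the strict inequality on $L(Mw-u)$ preserved in the limit by dominated convergence applied to $J\ast u_n\to J\ast u$. This technical detour is the only nontrivial point of the proof.
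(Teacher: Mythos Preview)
Your proof is correct and follows essentially the same route as the paper's: derive the inequality $J*u-u+cu'\le -\kappa/x^{\beta\gamma}$ from $f(u)\sim ru^\beta$ and the hypothesis, invoke Lemma~\ref{lem:mellet} with $p=\beta\gamma-1$ to bound $J*w-w+cw'$ from below, and then run the global-maximum comparison on $z=Mw-u$ for $M$ small. One remark: your ``main obstacle'' paragraph is superfluous and the analogy with the mollification in Lemma~\ref{lem:integrability} is misleading (there it served to justify an integration by parts, not a maximum principle); when $c=0$ the term $cz'$ simply disappears, and the contradiction at a maximiser only uses $J*z(x_\ast)\le z(x_\ast)$, which requires no regularity of $u$ --- this is why the paper does not treat $c=0$ separately.
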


\begin{proof} 
Using $f(u)\sim r u ^{\beta }$ as $u\to 0$ and estimate \eqref{rather-heavy}, we deduce that there is $\delta >0$ such that $f(u(x))\geq \frac{\delta}{x^{\beta\gamma }}$ if $x$ is sufficiently large. Hence, we get the existence of $A>1$ such that
\begin{equation}\label{v-super-bis}
J*u-u+cu'+\frac{\delta}{x^{\beta\gamma}}\leq 0 \; \text{ on } (A,\infty).
\end{equation}

On the other hand, since $p:=\beta\gamma -1>0$, it follows from Lemma \ref{lem:mellet} that the function $
w(x):=\mathbf 1 _{(-\infty,1]}(x) +\mathbf 1 _{[1,\infty)}(x) \frac{1}{ x^{\beta\gamma -1}}
$
satisfies
$$
J*w(x)-w(x)+cw'(x)\geq \frac{C_1}{x^{\alpha-1}}-\frac{C_2'}{x^{\beta\gamma}}+O\left(\frac{1}{x^{\beta\gamma +\alpha-2}}\right),
$$
as $x\to +\infty$, where $C_2'=C_2+\vert c\vert (\beta\gamma -1)>0$.

Therefore, for any $0<M<\frac{\delta/2}{C_2'}$, the function $z:=Mw-u$ satisfies 
\begin{equation}
\label{z-sub}
J*z(x)-z(x)+cz'(x)\geq \frac{\delta /2}{x^{\beta\gamma}}+O\left(\frac{1}{x^{\beta\gamma+\alpha-2}}\right)>0 \; \text{ on } (A,\infty),
\end{equation}
up to enlarging $A>1$ if necessary, 

We conclude as in Lemma \ref{lem:below}:  we
can select  $0<M<\frac{\delta/2}{C_2'}$  so that $z(x)=M w(x)-u(x)<0$ for all $x\in(-\infty,A]$. Assume by contradiction that there is  $x_0>A$ such that $z(x_0)>0$. Since $z(x)=Mw(x)-u(x)\to 0$ as $x\to +\infty$, the function $z$ reaches some global maximum at some point $x_1\in (A,\infty)$, which is contradicted by \eqref{z-sub}. This proves the lemma. 
\end{proof}

Equipped with the above a priori estimates and the integrability property \eqref{integrabilite} of Lemma \ref{lem:integrability}, we can now prove Theorem \ref{th:notw}.

\begin{proof}[Proof of Theorem \ref{th:notw}] Let us assume by contradiction inequality \eqref{alpha-beta-notw} together with the existence of a travelling wave $(c,u)$.

\noindent{\bf First case:} $0<\alpha -2<\frac 1 \beta$. In this regime, Lemma \ref{lem:below} and Lemma \ref{lem:integrability} are enough to derive a contradiction. Indeed, we select $\ep>0$ small enough so that $0<\alpha-2+\ep<\frac{1}{\beta}$. It follows from Lemma \ref{lem:below} that 
$u(x)^{\beta}\geq \frac{K^{\beta}}{x^{\beta(\alpha-2+\ep)}}$, for all $x\geq 1$. Since $\beta(\alpha-2+\ep)<1$, this contradicts the integrability property \eqref{integrabilite}.

\noindent{\bf Second case:} $\frac 1 \beta \leq \alpha -2<\frac 1 {\beta-1}$. In this regime, we further need to iterate Lemma \ref{lem:heavier} to derive a contradiction. We first select $\ep >0$ small enough so that $\frac 1 \beta<\gamma:=\alpha -2+\ep<\frac{1}{\beta-1}$. It follows from Lemma \ref{lem:below} that the assumptions of Lemma \ref{lem:heavier}
 hold true, so that we can apply it once (at least).  Notice that the recursive sequence
 $$
 \gamma _0=\gamma, \, \gamma _{n+1}=\beta\gamma _n -1
 $$
 tends to $-\infty$ as $n\to+\infty$, so that there is $N\geq 1$ such that
 $$
 \gamma _{N}\leq \frac 1\beta<\gamma _{N-1}<\cdots<\gamma _0<\frac{1}{\beta-1}.
 $$
 This allows us to apply recursively Lemma \ref{lem:heavier} $N$ times and to end up with a constant $D>0$ such that $u(x)\geq \frac{D}{x^{\gamma _{N}}}$, for all $x\geq 1$. Since $\beta\gamma _{N}\leq 1$, this again contradicts the integrability property \eqref{integrabilite}.
 
 Theorem \ref{th:notw} is proved. \end{proof}

\section{Acceleration in the Cauchy problem}\label{s:acceleration}

Through this section we assume \eqref{alpha-beta-notw} and study the acceleration phenomenon arising in the Cauchy problem
\begin{align}
&\partial_t u(t,x)=J* u(t,x) -u(t,x) +f(u(t,x)) \quad  t>0,\, x\in \R, \label{ac-eq-cauchy1}\\
&u(0,x)=u_0(x) \quad x\in \R,\label{ac-eq-cauchy2}
\end{align}
when $u_0$ is a front like initial data, in the sense of Assumption \ref{ass:u0}. 

\subsection{Proof of acceleration}

Here we prove Proposition \ref{prop:level-set} $(ii)$.

To do so, we need a preliminary result on ignition problems that serve as an approximation of our degenerate monostable problem. For $0<\theta <1$ we consider a smooth {ignition nonlinearity} $g_\theta:[0,1]\to \R$, meaning $g_\theta=0$ on $[0,\theta]\cup \{1\}$, $g_\theta >0$ on $(\theta,1)$. As proved in \cite{Coville2007b}, there is a unique speed $c_\theta \in \R$ and a unique decreasing profile $U_\theta$ solving the travelling wave problem
\begin{align}
J* U_\theta-U_\theta+c_\theta U_\theta'+g_\theta(U_\theta)=0 \quad \text{ on } \R,\label{eq:igni}\\
U_\theta(-\infty)=1, \quad U_\theta(0)=\theta, \quad U_\theta(\infty)=0.\label{eq:igni-lim}
\end{align}
Notice that when $c_\theta\neq 0$, $U_\theta\in C^1$ and  satisfies the equation in the classical sense. On the other hand, when $c_\theta=0$, depending on $g_\theta$ the function $U_\theta$ may have some discontinuities. However, in such a situation \eqref{eq:igni} is satisfied almost everywhere and the limits and the normalisation \eqref{eq:igni-lim} are still valid. As a consequence of the non existence of monostable waves 
Theorem \ref{th:notw}, we can prove the following.

\begin{prop}[Speeds of a sequence of  ignition waves]\label{prop:cinfinie}
Let Assumptions \ref{ass:J-noTW} and \ref{ass:f} hold.   Assume  \eqref{alpha-beta-notw}. Let $(g_{n})=(g_{\theta _n })$ be a sequence of ignition nonlinearities such that  $g_n\le  g_{n+1}\le f$ and $g_n\to f$. Let $(c_n,U_n)$ be the associated sequence of travelling waves. Then
$$
\lim_{n\to \infty }c_n=+\infty.
$$
\end{prop}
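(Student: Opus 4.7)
The plan is to argue by contradiction against Theorem~\ref{th:notw}. A standard comparison between ignition waves with ordered nonlinearities yields $c_n \le c_{n+1}$, so $c_n \to c^{*} \in \R \cup \{+\infty\}$; I assume $c^{*} < +\infty$ and will construct from $(U_n)$ a monostable travelling wave for $f$ with speed $c^{*}$, contradicting the non-existence given by Theorem~\ref{th:notw} in the regime \eqref{alpha-beta-notw}.

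The first step is a compactness argument by translation. Set $x_n := \sup\{ y : U_n(y) > 1/2 \}$ and $\tilde U_n := U_n(\cdot + x_n)$, so each $\tilde U_n$ is non-increasing with values in $[0,1]$, satisfies \eqref{eq:igni}--\eqref{eq:igni-lim} with the same speed $c_n$, and $\tilde U_n(0^{-}) \ge 1/2 \ge \tilde U_n(0^{+})$. Helly's selection theorem produces a subsequence converging, at each continuity point, to a non-increasing limit $U_\infty : \R \to [0,1]$ still satisfying $U_\infty(0^{-}) \ge 1/2 \ge U_\infty(0^{+})$. Since $g_n \uparrow f$ with $f$ continuous on the compact $[0,1]$, Dini's theorem provides uniform convergence $g_n \to f$, which will be used below.

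The second step is to pass to the limit via the integrated form of the wave equation: for continuity points $a < b$ of $U_\infty$,
\begin{equation*}
c_n \bigl( \tilde U_n(b) - \tilde U_n(a) \bigr) = \int_a^b \bigl( \tilde U_n(y) - J * \tilde U_n(y) - g_n(\tilde U_n(y)) \bigr) \, dy.
\end{equation*}
Dominated convergence (using $0 \le \tilde U_n, J * \tilde U_n \le 1$, uniform $g_n \to f$, and pointwise a.e.\ convergence of $\tilde U_n$ to $U_\infty$) gives
\begin{equation*}
c^{*} \bigl( U_\infty(b) - U_\infty(a) \bigr) = \int_a^b \bigl( U_\infty(y) - J * U_\infty(y) - f(U_\infty(y)) \bigr) \, dy.
\end{equation*}
Hence $U_\infty$ satisfies $J * U_\infty - U_\infty + c^{*} U_\infty' + f(U_\infty) = 0$ in the integrated/a.e.\ sense. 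Denote $m^{\pm} := U_\infty(\pm\infty)$, which exist by monotonicity with $m^{-} \ge 1/2 \ge m^{+}$. Letting $x \to \pm \infty$ in the equation, with $J * U_\infty(x) \to m^{\pm}$ by dominated convergence, forces $f(m^{\pm}) = 0$; since the only zeros of $f$ on $[0,1]$ are $0$ and $1$, necessarily $m^{-} = 1$ and $m^{+} = 0$.

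Thus $(c^{*}, U_\infty)$ is a non-trivial non-increasing solution of the travelling wave equation with the correct limits at $\pm\infty$; a strong maximum principle argument for the operator $J*\,\cdot - \mathrm{Id} + c^{*}\partial_x$ then promotes $U_\infty$ to a strictly decreasing profile, so $(c^{*}, U_\infty)$ is a travelling wave in the sense of the Definition, contradicting Theorem~\ref{th:notw}. The main subtlety is the passage to the limit in the regime $c^{*} \le 0$, where individual $U_n$ may be discontinuous (when $c_n = 0$) and no uniform derivative bound is available; working with the integrated form at continuity points of $U_\infty$ sidesteps this issue and handles both the $c^{*} > 0$ and $c^{*} \le 0$ cases in a uniform way.
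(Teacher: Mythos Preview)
Your proof is correct and follows the same overall strategy as the paper: assume $c_n \to c^{*} < +\infty$, normalize, extract a monotone limit via Helly's theorem, and obtain a monostable wave contradicting Theorem~\ref{th:notw}. The paper's execution differs in that it splits into cases according to the sign of $c^{*}$: when $c^{*}\neq 0$ it uses the equation to upgrade Helly convergence to $C^1_{loc}$ convergence, and when $c^{*}=0$ it further distinguishes whether $c_n<0$ for all $n$ or $c_n=0$ eventually, in the latter case exploiting invertibility of $s\mapsto s-g_n(s)$ near $0$ to recover enough continuity of $U_n$ for the passage to the limit. Your device of passing to the limit in the \emph{integrated} equation at continuity points of the Helly limit treats all these cases at once, and also yields the boundary conditions $U_\infty(\pm\infty)\in\{0,1\}$ via $f(m^{\pm})=0$ --- a point the paper asserts without explicit justification. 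Finally, the strong maximum principle step you invoke is harmless but unnecessary: the proof of Theorem~\ref{th:notw} (through Lemmas~\ref{lem:integrability}, \ref{lem:below}, \ref{lem:heavier}) uses only monotonicity and the limits at $\pm\infty$, not strict decrease, so a merely non-increasing limit already yields the contradiction.
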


\begin{proof} Since $g_{n+1}\geq g_n$ it follows from \cite[Corollary 2.3]{Coville2008a} that $c_{n+1}\geq c_n$. Assume by contradiction that $c_n \nearrow \bar c$ for some $\bar c\in \R$.  We distinguish two cases.

Assume here $\bar c\neq 0$. There is thus an integer $n_0$  such that, for all $n\ge n_0$, $c_n\neq 0$. As a consequence, for all $n\ge n_0$, $U_n$ is smooth and since any translation of $U_n$ is a still a solution, without any loss of generality we can assume  the normalisation $U_n(0)=1/2$. Now,  thanks to Helly's Theorem \cite{Brunk1956} and up to extraction, $U_n$ converges to a monotone function $\bar U$ such that $\bar U(0)=\frac 12$. Also, from the equation and up to extraction, $U_n$ also  converges in  $C^1_{loc}(\R)$, and the limit has to be $\bar U$. As a result, $\bar U$ is monotone and solves 
$$\begin{cases}
J*\bar U-\bar U+\bar c\bar U'+f(\bar U)=0 \quad \text{ on } \R,\\
\bar U(-\infty)=1,\quad \bar U(0)=\frac{1}{2}, \quad \bar U(\infty)=0.
\end{cases}
$$ 
In other words, we have contructed a monostable travelling wave under assumption \eqref{alpha-beta-notw}, which is  a contradiction with Theorem \ref{th:notw}. 

Assume here $\bar c=0$. Since $(c_n)$ is nondecreasing, either $c_n<0$ for all $n$, either there is an integer $n_0$ such that $c_n=0$ for all  $n\geq n_0$. In the former case,  since for all $n$ $U_n$ is smooth, without loss of generality $U_n$ can be normalized by $U_n(0)=\frac{1}{2}$.   We can then use the Helly's Theorem \cite{Brunk1956} and the normalisation to pass to the limit in the equation in a weak sense to obtain a monotone function $\bar U$ such that 
$$
\begin{cases}
J*\bar U-\bar U+f(\bar U)=0 \quad \text{almost everywhere in }  \R,\\
\bar U(-\infty)=1,\quad \bar U(0)=\frac{1}{2}, \quad \bar U(\infty)=0,
\end{cases}
$$ 
which again  contradicts  Theorem \ref{th:notw}. Let us now consider the remaining case, $c_n=0$ for $n\ge n_0$. Observe that from Assumption \ref{ass:f} we can find $0<s_0 <1$ such that  $s-f(s)$ is a one-to-one function in $[0,s_0]$  and, since $g_n\to f$ is of ignition type,  $s-g_n(s)$ is also a one-to-one function in $[0,s_0]$ for all $n$. Now since for $n\ge n_0$,   $U_n$ satisfies $U_n-g_n(U_n)=J*U_n$,  $U_n$ has to be continuous in $[U_n^{-1}(s_0),\infty)$. Now, thanks to invariance by translation,  we can assume that, for all $n\ge n_0$,   $U_n(0)=s_0$. The sequence of monotone functions $(U_n)_{n\ge n_0}$ being  bounded, thanks to Helly's Theorem \cite{Brunk1956} and the normalisation, as $n\to \infty$,  $U_n$ converges pointwise  to a monotone function $\bar U$ solution  of 
$$\begin{cases}
J*\bar U-\bar U+f(\bar U)=0 \quad \text{ on } \R,\\
\bar U(-\infty)=1,\quad \bar U(0)=s_0, \quad \bar U(\infty)=0,
\end{cases}
$$ 
which again contradicts Theorem \ref{th:notw}.
\end{proof}

\begin{rem}\label{rem:bistable}
Clearly, the results of Proposition \ref{prop:cinfinie} stand as well if we replace the ignition type nonlinearity $g_n$ by a bistable type nonlinearity.
\end{rem}
\newpage

We are now in the position to prove the first part \eqref{invasion} of Proposition \ref{prop:level-set} $(ii)$.

\begin{proof}[Proof of \eqref{invasion}] First, we prove \eqref{invasion} for the particular case where the initial datum $u_0$ is a smooth nonincreasing function such that
\begin{equation}\label{d0}
u_0(x)=\begin{cases}
d_0 &\text{ for }  x \le -1\\
0 &\text{ for }  x \ge 0,
\end{cases}
\end{equation}
for an arbitrary $0<d_0<1$. Since $u_0$ is  nonincreasing, we deduce from the comparison principle that, for all $t>0$, the function $u(t,x)$ is still decreasing in $x$.

Let us now extend $f$  by $0$ outside  the interval $[0,1]$. From \cite{Coville2007b}, Proposition \ref{prop:cinfinie} and Remark \ref{rem:bistable}, there exists $0<\theta<d_0$ and  a Lipschitz bistable function $g\le f$ --- i.e. $g(0)=g(\theta)=g(1)=0$,  $g(s)<0$ in $(0,\theta)$, $g(s)>0$ in $(\theta,1)$, and $g'(0)<0$, $g'(1)<0$, $g'(\theta)>0$---  such that there exists a smooth decreasing  function $U_\theta$ and  $c_\theta>0$ verifying
\begin{align*}
&J* U_\theta-U_\theta +c_{\theta} U_\theta'+g(U_\theta)=0 \quad \text{ on } \R,    \\
&U_\theta(-\infty)=1, \qquad U_\theta(\infty)=0.
\end{align*}

Let us now consider $v(t,x)$ the solution of the Cauchy problem
\begin{align*}
&\partial_t v(t,x) =J* v(t,x)-v(t,x) + g(v(t,x)) \quad \text{ for  } t>0, x\in \R,    \\
&v(0,x)=u_0(x).
\end{align*}
Since $g\le f$, $v$ is a subsolution of the Cauchy problem \eqref{ac-eq-cauchy1}-\eqref{ac-eq-cauchy2} and by the comparison principle, $v(t,x)\le u(t,x)$ for all $t>0$ and $x \in \R$.

Now, thanks to the global asymptotic stability result \cite[Theorem 3.1]{Chen1997}, since $d_0>\theta$  we know that there exists $\xi\in\R, C_0>0$  and $\kappa>0$ such that for all $t\ge 0$
$$
\|v(t,\cdot)-U_{\theta}(\cdot-c_\theta t+\xi)\|_{L^\infty} \le C_0 e^{-\kappa t}.
$$
Therefore, we have for all $t>0$ and $x \in \R$,
$$
u(t,x)\ge v(t,x)\ge U_{\theta}(x-c_\theta t+\xi)-C_0e^{-\kappa t}.
$$
Since $c_\theta>0$,  by sending $t \to \infty$, we get $1\ge \liminf_{t\to \infty}u(t,x)\ge \lim_{t\to \infty}U_{\theta}(x-c_\theta t+\xi)-C_0e^{-\kappa t}= 1$. As a result, for all $x$, $1\ge \limsup_{t\to \infty} u(t,x)\ge \liminf_{t\to \infty} u(t,x)= 1$, and therefore $u(t,x)\to 1$  as $t \to \infty$.  Since $u(t,x)$ is nonincreasing in $x$, the convergence is uniform on any set $(-\infty,A]$. This concludes the proof of \eqref{invasion} for our particular initial datum.

For a generic initial data satisfying Assumption \ref{ass:u0}, we can always, up to a shift in space, construct a smooth nonincreasing $\tilde{u_0}$ satisfying \eqref{d0} and $\tilde{u_0}\leq u_0$. Since the solution $\tilde u(t,x)$ of the Cauchy problem starting from $\tilde{u_0}$ satisfies \eqref{invasion}, so does $u(t,x)$ thanks to the comparison principle. 
\end{proof}

\begin{rem} Notice that the above proof only uses elementary arguments and holds as well for other types of reaction diffusion equations, as soon as a travelling front solution with a positive speed exists when the nonlinearity considered is replaced by any  nonlinearity  of ignition or bistable type. In particular,  it applies to  solutions of  Cauchy problems where the operator  $J* u -u$ is replaced by a fractional Laplacian $-(-\Delta)^{s}u$, $0<s<2$, or the standard diffusive operator $\Delta u$.  
\end{rem}

The property \eqref{invasion} now guarantees that for any $\lambda \in (0,1)$, the super level set $\Gamma_\lambda(t)$ is never empty for large time.
Let us now prove the second part \eqref{acc-prop} of Proposition \ref{prop:level-set} $(ii)$.

\begin{proof}[Proof of \eqref{acc-prop}]  Let $\lambda\in(0,1)$ be given. As above, there is no loss of generality to assume that the initial datum is as in the beginning of the proof of \eqref{invasion}, so that $u(t,x)$ is nonincreasing in $x$. From this and \eqref{invasion}, either for each $t>0$ large enough $\Gamma_\lambda(t)$ is bounded from above and $\Gamma _\lambda (t)=(-\infty,x_\lambda(t)]$,  or   $ \Gamma_\lambda(t_0)=(-\infty,\infty)$ for some $t_0>0$. In the latter situation,  using the constant $\lambda$ as a subsolution, we see that,  for all $t\ge t_0$, $ \Gamma_\lambda(t)=(-\infty,\infty)$ so that $x_\lambda(t)=+\infty$ and we are done. In the sequel, we assume that  for large $t$, says $t\ge t_1$, $x_\lambda(t)\in \R$. 

Let $g\le f$ be a smooth function such that $g(0)=g\left(\frac{1+\lambda}{2}\right)=0$, $g(s)=0$ for $s\le 0$, $g(s) \sim f(s) $ as $s\to 0$, and $g(s)>0$ for $s\in (0,\frac{1+\lambda}{2})$. For $\theta <\frac{1-\lambda}{2}$, let us consider the ignition type nonlinearity $g_{\theta}(s):=g(s-\theta)$ and let $\theta$ small, say $\theta\le \theta_0$, such that for all $\theta\le \theta_0$  there exists a smooth decreasing  function $U_\theta$ and  $c_\theta>0$ such that
\begin{align*}
&J* U_\theta-U_\theta +c_{\theta} U_\theta'+g_{\theta}(U_\theta)=0 \quad \text{ on } \R,    \\
&U_\theta(-\infty)= \frac{1+\lambda}{2}+\theta, \qquad U_\theta(0)=\theta, \qquad U_\theta(\infty)=0.
\end{align*}

Then by a straightforward computation, we see that  $\underline{U}(t,x):=U_\theta(x-c_\theta t)-\theta$ is a subsolution to equation \eqref{eq}. Notice that $\underline {U}(t,x)<\frac{1+\lambda}{2}<1$ and $\underline U (0,x)\leq 0$. Since $u(t,x)$ converges uniformly to $1$ in the set $(-\infty,0]$,  there thus exists $t_2>t_1$ such that $u(t_2,x)\ge \underline {U}(0,x)$. Hence, from the comparison principle, $u(t+t_2,x)\ge \underline {U}(t,x)=U_\theta(x-c_\theta t)-\theta$ for all $t>0$ and $x\in \R$. Denoting by $y_\theta$ the point where $U_\theta (y_\theta)=\lambda+\theta$, this in turn implies that $x_\lambda(t)\ge y_\theta +c_{\theta}(t-t_2)$ for all $t>0$. As a result, $ \liminf _{t\to \infty}\frac{x_\lambda(t)}{t}\ge c_\theta$.
The above argument being independent of $\theta\le \theta_0$  we get, thanks to Proposition \ref{prop:cinfinie},
$$ \liminf _{t\to \infty}\frac{x_\lambda(t)}{t}\ge \lim_{\theta \to 0}c_\theta=+\infty,
$$  
which concludes the proof.
\end{proof}

\subsection{Upper bound on the speed of the super level sets}

Here we prove the upper bound \eqref{upper} of Theorem \ref{th::speedacc}. To do so, we construct an adequate supersolution.

\begin{proof}[Construction of a supersolution] For $p>0$ to be specified later, let us define
\begin{equation}\label{v0}
v_0(x):=\begin{cases}
1 &\text{ if } x\leq 1 \\
\frac{1}{ x^{p}} &\text{ if } x> 1.
\end{cases}
\end{equation}
For  $\gamma>0$ to be specified later,  let  $w(\cdot,x)$ denote the solution of the Cauchy problem
$$
\frac{dw}{dt}(t,x)=\gamma w^{\beta}(t,x), \quad w(0,x)=v_0(x),
$$
that is
 $$
 w(t,x)=\frac{1}{\left(v_0^{1-\beta}(x) -\gamma(\beta -1) t\right)^{\frac{1}{\beta -1}}}.
 $$
 Notice that $w(t,x)$ is not defined for all times. When $x\le 1$, $w(t,x)$ is defined for $t\in[0,\frac{1}{\gamma(\beta-1)})$ whereas for $x>1$,   $w(t,x)$ is defined for $t\in\left[0,T(x):=\frac{x^{p(\beta -1)}}{\gamma(\beta-1)}\right)$.
 Let $ x_0(t)$ be such that 
 \begin{equation}\label{ac-def-x0}
  x_0(t):=\left[1+\gamma (\beta -1)t \right]^{\frac{1}{p(\beta-1)}}\geq 1,
 \end{equation}
 so that $w(t,x_0(t))=1$ and $w(t,x)<1$ whenever $x>x_0(t)$. Last, we define 
 \begin{equation}\label{def-m}
 m(t,x):=\begin{cases}
1 &\text{ if } x\leq x_0(t) \\
w(t,x) &\text{ if } x> x_0(t),
\end{cases}
\end{equation}
and show below that $m$ is a  supersolution of \eqref{ac-eq-cauchy1}--\eqref{ac-eq-cauchy2}, provided $p>0$ and $\gamma>0$ are appropriately chosen.

If $(t,x)$ is such that $x\leq x_0(t)$, we see that $\partial _t m(t,x)=f(m(t,x))=0$, and 
$$
\partial_t m(t,x) -\left(J*  m(t,x)-m(t,x)+f( m(t,x))\right)\ge 0,
$$
since $m\leq 1$ by construction.  Hence,  it  remains to consider the $(t,x)$ such that $t>0$ and $x>  x_0(t)$, which we consider below.

In view of Assumption \ref{ass:f}, there is $r_0>0$ such that $f(u)\leq r_0 u^\beta$ for all $0\leq u\leq 1$. By definition of $ m(t,x)$, we  have  
$\partial_t  m(t,x)=\gamma w^{\beta}(t,x)$ and $f( m(t,x))\le r_0w^\beta(t,x)$. Next, for $\gamma>\gamma_0:=r_0+1$,  let us define 
 \begin{equation}\label{ac-def-xgamma}
  x_\gamma(t):=\left[\left(\gamma-r_0\right)^{\frac{\beta-1}{\beta}}+\gamma (\beta -1)t \right]^{\frac{1}{p(\beta-1)}}>1,
 \end{equation}
so that $ x_0(t)< x_\gamma(t)$ and $w(t, x)\geq \left(\frac{1}{\gamma-r_0}\right)^{\frac{1}{\beta}}$ for $x_0(t)<x\leq x_\gamma(t)$. Thus, for  $t>0$ and  $ x_0(t)< x\le  x_\gamma(t)$,
\begin{align*}
\partial_t m(t,x) -(J*  m(t,x)-m(t,x))-f( m(t,x))&\ge (\gamma-r_0)w^{\beta}(t,x)-J*  m(t,x),\\
&\ge  (\gamma-r_0)w^{\beta}(t,x)-1,\\
&\ge 0.
\end{align*}
 Hence,  it  remains to consider the $(t,x)$ such that $t>0$ and $x>x_\gamma(t)$, which we consider below.

Let us  estimate more precisely $J*  m(t,x)$ in the region $x>  x_\gamma(t)$. To simplify the presentation,  let us introduce the notations $q:=p(\beta-1)$ and $\sigma:=\gamma(\beta-1)t$. Let $K>1$ to be specified later. We write
\begin{equation}
\begin{array}{lcc} 
J*  m(t,x)&=\underbrace{\int_{-\infty}^{\frac{ x_0(t)-x}{K}}J(-z) m(t,x+z)\,dz}&+\underbrace{\int_{\frac{ x_0(t)-x}{K}}^{\infty}J(-z)\frac{1}{\left[(x+z)^{q}-\sigma\right]^{\frac{p}{q}}}\,dz}.\label{bidule}\\
&I_1&I_2
\end{array}
\end{equation}
 In view of \eqref{ac-def-xgamma}, we can select $\gamma_1=\gamma _1(K)>\gamma_0$ large enough so that, for all $\gamma\geq \gamma _1$, all $x>x_\gamma (t)$, we have $  x_0(t)-x<-K$. Therefore, from $m\leq 1$ and \eqref{hyp-queue-droite},  we get (in the sequel $C$ denotes a generic positive constant that may change from place to place)
$$
I_1\le \int_{-\infty}^{\frac{ x_0(t)-x}{K}}J(-z)\,dz\le\frac{CK^{\alpha-1}}{(x- x_0(t))^{\alpha-1}}. $$
 By choosing $q<1$ and using the definition of $ x_0(t)$ we see that 
 $$ 
 \frac{1}{(x- x_0(t))^{\alpha-1}}\le \frac{1}{(x^q- x^q_0(t))^{\frac{\alpha-1}{q}}}\le w^{\frac{\alpha-1}{p}}\left(t,\left(x^q-1\right)^{1/q}\right).
 $$
Using that $q<1$ and $w(t,\cdot)$ is a decreasing function in $(x_0(t),\infty)$ (this can be seen by computing $\partial _xw= v_0' v_0^{-\beta}w^{\beta}\leq 0$), 
we have for $x>>1$, say $x>A_0>\frac{2}{q}+1$,
$$
w^{\frac{\alpha-1}{p}}\left(t,\left(x^q-1\right)^{1/q}\right)\le w^{\frac{\alpha-1}{p}}\left(t,x-\frac{2}{q}\right).
$$
Up to enlarging $A_0$, for $x\ge A_0$ we have $4x^{q-1}<\frac{1}{2}$ and $x^q-4x^{q-1}\le \left(x-\frac{2}{q}\right)^{q}\le x^q-x^{q-1} $.  Then for such $A_0$, we see that, for $x\ge x_0(t)+A_0$,  
\begin{align*}
  \frac{w(t,x-\frac{2}{q})}{w(t,x)}= \left(\frac{1}{1-w^{\beta -1}(t,x)\left[x^q-\left(x-\frac{2}{q}\right)^q\right]}\right)^{\frac{p}{q}} &\le\left(\frac{1}{1-4x^{q-1}w^{\beta -1}(t,x)}\right)^{\frac{p}{q}},\\
  &\le\left(1+\frac{4x^{q-1}w^{\beta-1}(t,x)}{1-4x^{q-1}w^{\beta -1}(t,x)}\right)^{\frac{p}{q}},\\
  &\le 2^{\frac{1}{\beta-1}}.
  \end{align*} 
 Therefore, for $\gamma$ large enough, say $\gamma\ge \gamma_2(A_0)$,  we have for all $t> 0$,  $x\ge  x_\gamma(t)\ge \  x_0(t)+A_0$,  
\begin{equation}\label{ac-eq-I1}
I_1\le C_1K^{\alpha-1} w^{\frac{\alpha-1}{p}}(t,x).
\end{equation}

We now turn to $I_2$. Using the change of variable $u=\frac{z}{x}$  and rearranging the terms, we get    
\begin{equation}\label{termeI2}
I_2=xw(t,x)\int_{\frac{ x_0(t)}{Kx}-\frac{1}{K}}^{\infty}J(-xu)\frac{1}{\left(\frac{[1+u]^q-1}{1-\frac{\sigma}{x^q}}+1\right)^{p/q}}\,du=xw(t,x)(I_3+I_4),
\end{equation}
where
$$
I_3:= \int_{-\frac{1}{x}}^{\infty}J(-xu)\frac{1}{\left(\frac{[1+u]^q-1}{1-\frac{\sigma}{x^q}}+1\right)^{p/q}}\,du,\quad  I_4:=\int_{\frac{ x_0(t)}{Kx}-\frac{1}{K}}^{-\frac{1}{x}}J(-xu)\frac{1}{\left(\frac{[1+u]^q-1}{1-\frac{\sigma}{x^q}}+1\right)^{p/q}}\,du,
$$
which we estimate below.

 For $I_3$, since $u\in \left[-\frac{1}{x},\infty\right)$,  $q<1$ and $(1+u)^q$ is a monotone increasing function,  by using the definition of $w(t,x)$ we have 
$$
\frac{1}{\left(\frac{[1+u]^q-1}{1-\frac{\sigma}{x^q}}+1\right)^{p/q}}\le \frac{1}{\left(1-w^{\beta-1}(t,x)\right)^{p/q}}.
$$
Now, we know that for $x>x_\gamma(t)$, $w(t,x)\leq\left( \frac{1}{\gamma-r_0}\right)^{1/\beta}<1$ so that a Taylor expansion yields a constant $\bar C(q)>0$ such that
$$
 \frac{1}{\left(1-w^{\beta-1}(t,x)\right)^{p/q}}\leq 1+\frac p q(1+\bar C(q))w^{\beta -1}(t,x),
$$
so that
\begin{equation}\label{ac-eq-I3}
I_3\le \frac{1}{x}\int_{-1}^{+\infty}J(-z)\,dz + \frac{p}{q}(1+\bar C(q))\frac{w^{\beta-1}(t,x)}{x}\int_{-1}^{+\infty}J(-z)\,dz.
\end{equation}

 For $I_4$, use the following claim, whose proof is postponed. 
\begin{cla}\label{ac-cla-I4} For  $q<1$,   
there exists $K(q)>0$ such that  for all $t>0$, all $K\geq K(q)$, all $x>x_0(t)$, 
all $u\in \left[-\frac{1}{K}+\frac{ x_0(t)}{Kx},-\frac{1}{x}\right]$, we have 
$$\frac{[1+u]^q-1}{1-\frac{\sigma}{x^q}}\ge -\frac{1}{2}.$$
\end{cla}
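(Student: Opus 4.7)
The plan is to reduce the inequality to a clean algebraic one and then close it with the concavity of $s\mapsto s^q$ (note $0<q<1$). First, I would record two easy sign facts: $u\leq -1/x<0$ forces $(1+u)^q-1<0$, and, reading $\sigma=x_0(t)^q-1$ directly off \eqref{ac-def-x0}, one has
\[
1-\frac{\sigma}{x^q}=\frac{x^q-x_0(t)^q+1}{x^q}>0,
\]
since $x>x_0(t)\geq 1$. The claim is therefore equivalent to
\[
1-(1+u)^q\;\leq\;\tfrac{1}{2}\bigl(1-\sigma/x^q\bigr).
\]

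For the left-hand side, the lower endpoint of the admissible interval satisfies $u\geq (x_0(t)-x)/(Kx)\geq -1/K$, so for $K\geq 2$ we have $1+u\in(1/2,1]$. On this range the elementary inequality $s^q\geq s$ (valid for all $s\in(0,1]$ whenever $0<q\leq 1$, by monotonicity of $s^{q-1}$) applied at $s=1+u$ gives $(1+u)^q\geq 1+u$, hence
\[
1-(1+u)^q\;\leq\;-u\;\leq\;\frac{x-x_0(t)}{Kx}.
\]
It then suffices to prove
\[
\frac{x^{q-1}(x-x_0(t))}{K}\;\leq\;\tfrac{1}{2}\bigl(x^q-x_0(t)^q+1\bigr).
\]

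To close this, I would apply the mean value theorem to $s\mapsto s^q$ on $(x_0(t),x)$, producing $\xi\in(x_0(t),x)$ with $x^q-x_0(t)^q=q\xi^{q-1}(x-x_0(t))$. Since $q-1<0$ and $\xi<x$, one has $\xi^{q-1}>x^{q-1}$, so
\[
x^{q-1}(x-x_0(t))\;<\;\tfrac{1}{q}\bigl(x^q-x_0(t)^q\bigr).
\]
Setting $K(q):=2/q$ (which is at least $2$ since $q<1$), for every $K\geq K(q)$ this yields
\[
\frac{x^{q-1}(x-x_0(t))}{K}\;\leq\;\frac{x^q-x_0(t)^q}{qK}\;\leq\;\tfrac{1}{2}\bigl(x^q-x_0(t)^q+1\bigr),
\]
as required.

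The one point that needs care is the conversion of scales: the crude bound $-u\leq 1/K$ is too lossy, because the denominator $1-\sigma/x^q$ can be of order $x^{-q}$, so one cannot discard the $1/x$ factor present in the sharper bound $-u\leq (x-x_0(t))/(Kx)$. Feeding that sharper bound through the mean value comparison---which goes the right way precisely because $q<1$---is what converts the linear-scale difference $x-x_0(t)$ into the $q$-th power difference $x^q-x_0(t)^q$ appearing in the denominator.
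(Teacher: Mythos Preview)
Your proof is correct. The overall shape is the same as the paper's---bound the negative numerator linearly in $u$, then convert the linear difference $x-x_0(t)$ into the $q$-th power difference $x^q-x_0(t)^q$ that sits in the denominator---but your choices at both steps are different and a bit cleaner. The paper bounds the numerator via a Taylor-type inequality $(1+u)^q\geq 1+\tfrac{2}{q}u$, valid only for $|u|\leq 1/K$ with $K$ large (this is where their first threshold $K_0$ comes from), and then performs the scale conversion by substituting $x=\theta x_0(t)$ and invoking an unspecified bound $\sup_{\theta>1}\frac{1-1/\theta}{1-1/\theta^q}\leq C(q)$, yielding $K(q)=\max\{K_0,4C(q)/q\}$. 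You replace the first step by the elementary $s^q\geq s$ on $(0,1]$ (no smallness of $u$ required), and the second by a direct mean value comparison $x^{q-1}(x-x_0(t))<\tfrac{1}{q}(x^q-x_0(t)^q)$, which indeed goes the right way because $q-1<0$; this produces the explicit constant $K(q)=2/q$. Your closing remark is also on point: the naive bound $-u\leq 1/K$ would fail because the denominator can be of order $x^{-q}$, and it is exactly the sharper bound $-u\leq (x-x_0(t))/(Kx)$ together with the MVT that matches scales.
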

\noindent For $q<1$, we select $K\ge K(q)$ and  $\gamma\ge \max\{\gamma_0,\gamma_1(K),\gamma_2(A_0)\}$. From the above claim, we deduce 
from  Taylor expansion of the fraction $\frac{1}{(1-z)^{\frac{p}{q}}}$, there exists a constant $\tilde C(q)$ such that 
 $$\
\frac{1}{\left(\frac{[1+u]^q-1}{1-\frac{\sigma}{x^q}}+1\right)^{p/q}}\le 1+\frac{p}{q}(1+\tilde C(q))x^qw^{\beta-1}(t,x)\left(1-[1+u]^q\right).
$$
Since $q<1$, and $J(z)|z|\in L^{1}(\R)$ it follows that 
\begin{equation}\label{ac-eq-I4}
I_4\le \frac{1}{x}\int_{\frac{ x_0(t)-x}{K}}^{-1}J(-z)\,dz + \frac{p}{q}(1+\tilde C(q))\frac{w^{\beta-1}(t,x)}{x}\int_{\frac{ x_0(t)-x}{K}}^{-1}J(-z)|z|^q\,dz.
\end{equation}

Owing to \eqref{termeI2}, \eqref{ac-eq-I3} and \eqref{ac-eq-I4}, we get that, for some constant $C_2(q)>0$,
\begin{equation}\label{ac-eq-I2}
I_2\le w(t,x)\int_{\frac{ x_0(t)-x}{K}}^{+\infty}J(-z)\,dz+ C(q)w^{\beta}(t,x)\leq w(t,x)+C_2(q)w^{\beta}(t,x),
\end{equation}
since $\int _\R J=1$. Now, from \eqref{bidule}, \eqref{ac-eq-I1} and \eqref{ac-eq-I2},  we get,  for $t> 0$ and $x>  x_\gamma(t)$,
\begin{equation}
J*  m(t,x)- m(t,x)\le  \tilde C_1(q) w^{\frac{\alpha-1}{p}}(t,x) + C_2(q) w^{\beta}(t,x)
\end{equation}
where $\tilde C_1(q)=C_1K^{\alpha-1}(q)$. As a result,
$$
\partial_t  m -(J*   m- m)-f( m)\ge  w^{\beta}\left(\gamma-r_0-C_2(q)-\tilde C_1(q) w^{\frac{\alpha -1}{p}-\beta}\right).
$$

We are now close to conclusion. To validate the above computations we need $q=p(\beta-1)<1$, and in the above inequality we need the exponent $\frac{\alpha -1}{p}-\beta$ to be nonnegative. In view of \eqref{alpha-beta}, these two conditions reduce to $p\leq \frac{\alpha -1}{\beta}$, so that we make the optimal choice $p=\frac{\alpha -1}{\beta}$. For this choice of $p$, and thus of $q$, we now choose $\gamma \ge \gamma^*:=\max\{\gamma_0,\gamma_1(K),\gamma_2(A_0),r_0+C_2(q)+\tilde C_1(q)\}$, so that the right hand side of the above inequality is positive. This completes the construction of the supersolution. \end{proof}

Equipped with the above supersolution, we can now prove \eqref{upper}.

\begin{proof}[Proof of \eqref{upper}] In view of Assumption \ref{ass:u0} on the initial datum $u_0$, we can assume, up to a shift in space, that $u_0\leq v_0=m(0,\cdot)$ where $v_0$ is as \eqref{v0}. It therefore follows from the comparison principle that $u(t,x)\leq m(t,x)$, where $m(t,x)$ is the supersolution \eqref{def-m} with $p=\frac{\alpha -1}{\beta}$ and $\gamma\geq \gamma ^*$. Hence, for $\lambda \in(0,1)$, the super level set $\Gamma_\lambda(t)$ of $u$ is included in that of $m$. The latter can be explicitly computed, and we deduce
$$
x_\lambda (t)\leq  \left[ \left(\frac{1}{\lambda}\right)^{\beta -1} +\gamma  (\beta-1) t    \right]^{\frac{\beta}{(\alpha -1)(\beta -1)}}\leq \left[2 \gamma (\beta-1)t\right]^{\frac{\beta}{(\alpha -1)(\beta-1)}},
$$
for $t\geq T_\lambda$, with $T_\lambda >0$ large enough. This concludes the proof of \eqref{upper}. \end{proof}

To complete this subsection, it remains to prove Claim \ref{ac-cla-I4}.

\begin{proof}[Proof of  Claim \ref{ac-cla-I4}] Clearly $-\frac 1K\leq u\leq 0$ so that for $K$ large enough, say $K\geq K_0$, we have $(1+u)^q\ge 1+\frac{2}{q}u$. Hence
$$
 \frac{[1+u]^q-1}{1-\frac{\sigma}{x^q}}\ge \frac{2}{q}\frac{u}{1-\frac{\sigma}{x^q}}\ge \frac{2}{qK}\frac{ x_0(t)-x}{x^{1-q}}\frac{1}{x^q-\sigma},
$$
 since $\frac{ x_0(t)}{Kx}-\frac{1}{K}\leq u \leq -\frac{1}{x}$. Since $x>  x_0(t)$ we can write $x=\theta  x_0(t)$ for some $\theta \in (1,\infty)$.   Plugging this  in the right hand side of the above inequality, using $\sigma= x_0^q(t)-1$, and rearranging the terms we achieve  
\begin{align}
\frac{[1+u]^q-1}{1-\frac{\sigma}{x^q}}&\ge -\frac{2}{qK}\frac{\theta-1}{\theta-\theta^{1-q}+\theta^{1-q} x_0^{-q}(t)}\nonumber\\
&= -\frac{2}{qK}\frac{\theta-1}{\theta-\theta^{1-q}}\frac{\theta-\theta^{1-q}}{\theta-\theta^{1-q}+\theta^{1-q} x_0^{-q}(t)}\nonumber\\
&\geq -\frac{2}{qK}\frac{\theta-1}{\theta-\theta^{1-q}}\nonumber\\
&=-\frac{2}{qK}\frac{1-\frac{1}{\theta}}{1-\frac{1}{\theta^{q}}}\label{ac-eq-esti2-dl}.
\end{align}
Since there is $C(q)>0$ such that $\frac{1-\frac{1}{\theta}}{1-\frac{1}{\theta^{q}}}\leq C(q)$ for all $\theta >1$, we end up with
$$
\frac{[1+u]^q-1}{1-\frac{\sigma}{x^q}}\ge -\frac{2}{qK}C(q).
$$
The claim is then proved by taking  $K\ge K(q):= \max \left\{K_0, \frac{4C(q)}{q}\right\}$.
\end{proof}

\subsection{Lower bound on the speed of the super level sets}

Here we prove the lower bound \eqref{lower} of Theorem \ref{th::speedacc}. To measure the  acceleration  in this context, we use a subsolution that fills the space with a superlinear speed.  The construction of this subsolution is inspired by that   in \cite{Garnier2011} for nonlocal diffusion but in a KPP situation, and that in \cite{Alfaro2015a} in a Allee effect situation but for local diffusion.

\medskip

\noindent {\bf Step one.} It consits in using diffusion to gain an algebraic tail at time $t=1$.

 By Assumption \ref{ass:u0} on the initial datum $u_0$, we can construct a nonincreasing $\tilde u _0$ such that $\tilde u_0\leq u_0$ and 
\begin{equation}\label{c0}
\tilde u_0(x)=\begin{cases}
c_0 &\text{ for }  x \le -R_0-1\\
0 &\text{ for } x \ge -R_0,
\end{cases}
\end{equation}
for some $0<c_0<1$ and $R_0>0$. From the comparison principle, it is enough to prove \eqref{lower} for $u(t,x)$ the solution of \eqref{eq} starting from $\tilde u_0$.

Since $f$ is nonnegative, the comparison principle also implies $u(t,x)\ge v(t,x)$ for all $t>0$, $x\in \R$, where $v(t,x)$ is the solution of the linear problem 
\begin{align*}
&\partial_t v(t,x)= J* v(t,x)-v(t,x), \quad t>0,\, x\in \R,\\
&v(0,x)=\tilde u_0(x).
\end{align*}
Again from the comparison principle we get  $v(t,x)\ge e^{-t}(\tilde u_0(x)+t J* \tilde u_0(x))$, and thus   $v(1,x)\ge e^{-1}\left(\tilde u_0(x)+J* \tilde u_0(x)\right)$. In particular, for $x>0$ we have
$$
v(1,x)\geq e^{-1}J* \tilde u_0(x)\ge e^{-1}c_0\int_{R_0+1+x}^{\infty}J(z)\,dz\ge \frac{c_0/C}{\alpha -1}\frac{1}{(R_0+1+x)^{\alpha -1}},
$$ 
where we have used the tail estimate \eqref{hyp-queue-droite-noTW}. As a result, we can find a small enough $d>0$ such that
\begin{equation}\label{ac:def-v0}
u(1,x)\geq v(1,x) \ge v_0(x):= 
\begin{cases}
d &\text{ for }  x \le 1\\
\frac{d}{x^{\alpha-1}} &\text{ for } x \ge 1.
\end{cases}
\end{equation}
Hence, from the comparison principle and up to a shift in time, it is enough to 
prove \eqref{lower} for $u(t,x)$ the solution of \eqref{eq} starting from $v_0$, which we do below.

\medskip

\noindent {\bf Step two.} It consists in the construction of the subsolution.

Let us consider the  function $g(s):=s(1-Bs)$, with $B>\frac{1}{2d}$. We have  $g(s)\le 0$ for all $s\ge \frac{1}{B}$ and  $g(s)\le \frac{1}{4B}\le d$ for all $s\ge 0$.

As in the previous subsection,  let  $w(\cdot,x)$ denote the solution of the Cauchy problem
$$
\frac{dw}{dt}(t,x)=\gamma w^{\beta}(t,x), \quad w(0,x)=v_0(x),
$$
that is
 $$
 w(t,x)=\frac{1}{\left(v_0^{1-\beta}(x) -\gamma(\beta -1) t\right)^{\frac{1}{\beta -1}}}, 
 $$
where $v_0$ is defined in  \eqref{ac:def-v0}. Notice that $w(t,x)$ is not defined for all times. When $x\le 1$, $w(t,x)$ is defined for $t\in[0,\frac{1}{d^{\beta -1}\gamma(\beta-1)})$, whereas for $x>1$,   $w(t,x)$ is defined for $t\in\left[0,T(x):=\frac {x^{(\alpha-1)(\beta -1)}}{d^{\beta -1}\gamma(\beta-1)}\right)$.  Let us define
 \begin{equation}\label{ac-def-xB}
 x_{B}(t):=d^{\frac{1}{\alpha-1}} \left[B^{\beta-1}2^{ \beta -1}+\gamma (\beta -1)t \right]^{\frac{1}{(\alpha-1)(\beta-1)}}>1,
 \end{equation}
 so that $w(t,x_B(t))=\frac{1}{2B}$.

For  $x>1$ and $0< t< T(x)$, we compute 
 \begin{align*}
 &\partial_x w(t,x)= v_0'(x) v_0^{-\beta}(x)w^{\beta}(t,x)  \le 0,\\
 &\partial_{xx}w(t,x)=  v_0^{-\beta}(x)w^\beta(t,x)\left(v_0''(x)+  \beta \frac{( v_0'(x))^2}{v_0(x)}\left[ \left(\frac{w(t,x)}{v_0(x)}\right)^{\beta-1}-1\right]   \right)\geq 0.
 \end{align*}
Hence, for $t>0$,  $w(t,\cdot)$ is a decreasing convex function on  at least  $(x_B(t),\infty)$. 

Let us now define
 $$
 m(t,x):= 
 \begin{cases}
\frac{1}{4B} &\text{ for }  x \le x_B(t)\\
g(w(t,x)) &\text{ for } x > x_B(t).
\end{cases}
$$
Observe that: when $x>x_B(0)$, $m(0,x)=g(v_0(x))\leq v_0(x)$; when $x<1$, $m(0,x)=\frac{1}{4B}\leq d=v_0(x)$; when $1\leq x\leq x_B(0)$, $ v_0(x)\ge \frac{d}{x_B^{\alpha-1}(0)}=\frac{1}{2B}$ so that $m(0,x)\leq \frac{1}{4B}\leq v_0(x)$. Hence $m(0,x)\leq v_0(x)$ for all $x\in \R$. Let us now show that $m(t,x)$ a subsolution to \eqref{eq} for an appropriate choice of $\gamma$ and $B$. 
 
First, notice that, since $g\left(\frac{1}{2B}\right)=\frac{1}{4B}$ and $g'\left(\frac{1}{2B}\right)=0$, we see that $m\in C^{1}([0,\infty)\times \R)$. 
We compute
\begin{equation}\label{ac-eq-dtm}
\partial _t m(t,x)= 
 \begin{cases}
0  &\text{ for }  x \le x_B(t)\\
\gamma  w^{\beta}(t,x)\left(1 -2B w(t,x)\right)  &\text{ for } x > x_B(t).
\end{cases}
\end{equation}
Also, since $f$ satisfies \eqref{hyp-f-zero} and \eqref{hyp-f-un}, there exists a small $\delta >0$ such that $f(u)\geq \delta u^\beta (1-u)$ for all $0\leq u \leq 1$. As a result, we see that
\begin{equation}\label{ac-eq-f}
f(m(t,x))\geq 
 \begin{cases}
C_0w^\beta(t,x_B(t))  &\text{ for }  x \le x_B(t)\\
C_0w^{\beta}(t,x)  &\text{ for } x > x_B(t),
\end{cases}
\end{equation}
where  $C_0:=\frac{\delta}{2^\beta}\left(1- \frac{1}{4B}\right)$.  As far as the nonlocal diffusion term is concerned, thanks to the monotone behavior of  $m$, we have
$$
J* m(t,x) -m(t,x) \ge \int_{x}^{+\infty}J(x-y)(m(t,y)-m(t,x))\,dy=:\I(x),
$$ 
and we estimate $\I(x)$ below.

Assume first $x\le x_B(t)$, so that $m(t,x)=m(t,x_B(t))$ and by using the fundamental theorem of calculus
\begin{align*}
\I(x) &\geq   \int_{x_B(t)}^{+\infty}J(x-y)(m(t,y)-m(t,x_B(t)))\,dy, \\
&= \int_{x_B(t)}^{+\infty}\int_0^1J(x-y)(y-x_B(t))\partial_xm(t,x_B(t)+s(y-x_B(t)))\,dyds,\\
&=\int_{0}^{+\infty}\int_0^1J(x-x_B(t)-z)z \partial_xm(t,x_B(t)+sz) \,dzds.
\end{align*} 
  Now, $w(t,\cdot)$ being a positive decreasing convex function  in $(x_B(t),\infty)$, we have, for any $sz>0$,  
$$
\partial_xm(t,x_B(t)+sz)=\partial_xw(t,x_B(t)+sz)\left(1 -2Bw(t,x_B(t)+sz)\right)\ge \partial_xw(t,x_B(t)),
$$
so that
$$
J* m(t,x) -m(t,x)\ge \partial_xw(t,x_B(t)) \int_{0}^{+\infty}J(x-x_B(t)-z)z \,dz \ge \partial_xw(t,x_B(t))\int_{\R}J(z)|z| \,dz.
$$
As a result 
 \begin{equation}\label{ac-eq-Jm1}
 J* m(t,x) -m(t,x)\ge C v_0'(x_B(t)) v_0^{-\beta}(x_B(t)) w^{\beta}(t,x_B(t)),\quad \forall x\leq x_B(t), 
 \end{equation}
where $C:=\int_{\R}J(z)|z| \,dz$.

Similarly, when $x> x_B(t)$  by using the fundamental theorem of calculus, we get
\begin{align*}
\I(x)&=\int_{0}^{+\infty}\int_0^1J(-z)z  \partial_x w(t,x+sz)\left(1 -2Bw(t,x+sz)\right) \,dzds.
\end{align*} 
Then, by using the convexity and the monotonicity of $w(t,\cdot)$ in $(x_B(t),\infty)$,  we achieve 
\begin{equation}\label{ac-eq-Jm2}
J* m(t,x) -m(t,x)\ge C\partial_x w(t,x)=Cv_0'(x)v_0^{-\beta}(x)w^{\beta}(t,x), \quad \forall x>x_B(t).
\end{equation}

Collecting \eqref{ac-eq-dtm}, \eqref{ac-eq-f}, \eqref{ac-eq-Jm1} and \eqref{ac-eq-Jm2}, we end up with
$$
(\partial _t m -(J*m-m)-f(m))(t,x) \leq 
 \begin{cases}
-w^{\beta}(t,x_B(t))\left[C_0+h(t,x)\right] &\text{ for }  x \le x_B(t)\\
 -w^{\beta}(t,x)\left[C_0+h(t,x) -\gamma\right] &\text{ for } x > x_B(t),
\end{cases}
$$
where  
$$
h(t,x)= 
 \begin{cases}
Cv_0'(x_B(t))v_0^{-\beta}(x_B(t)) &\text{ for }  x \le x_B(t)\\
Cv_0'(x)v_0^{-\beta}(x)  &\text{ for } x > x_B(t).
\end{cases}
$$

We now choose $\gamma\le \frac{C_0}{2}$. In view of the above inequalities, to complete the construction of the subsolution $m(t,x)$, it suffices to find a condition on $B$ so that $h(t,x)\geq -\frac{C_0}{2}$ for all $t>0$, $x\in \R$. From the definition of $h(t,x)$ and that of $v_0(x)$ in \eqref{ac:def-v0}, this corresponds to achieve
$$
x^{(\beta-1)(\alpha - 1)-1}\le  \frac{C_0d^{\beta-1}}{2C(\alpha -1)}, \quad \text{ for all } t>0, x\geq x_B(t).
$$
Since $(\beta-1)(\alpha -1)<1$, this reduces to the following condition on $x_B(0)$
\begin{equation*} 
x_B(0)\ge \left(\frac{C_0d^{\beta-1}}{2C(\alpha -1)}\right)^{\frac{1}{1-(\beta-1)(\alpha - 1)}}. \label{ac-x0t-ineq}  
\end{equation*}
From \eqref{ac-def-xB} we have $x_B(0)=(2Bd)^{\frac{1}{\alpha -1}}$. Hence, in view of the definition of $C_0$, the above inequality holds by selecting $B\geq B_0$, with $B_0>0$ large enough. This concludes the construction of the subsolution $m(t,x)$.

\medskip
\noindent {\bf Step three.} It consits in using the subsolution to prove the lower estimate in \eqref{lower}.

Fix $\gamma>0$ and $B_0 >0$ as in the previous step so that $m(t,x)$ is a subsolution. From the comparison principle we get $m(t,x)\le u(t,x)$, for all $t>0$ and $x \in \R$. Recall that $m(t,x_{B_0}(t))=\frac{1}{4B_0}$ and that $u(t,\cdot)$ is nonincreasing (since initial datum $v_0$ is) so that
\begin{equation}
\label{u-grand}
u(t,x)\geq \frac{1}{4B_0}, \quad \forall x\leq x_{B_0}(t).
\end{equation}
In particular, for any $0<\lambda \leq \frac{1}{4B_0}$, the \lq\lq largest'' element $x_\lambda(t)$ of the super level set $\Gamma _\lambda (t)$ has to satisfy
$$
x_\lambda(t)\geq x_{B_0}(t)\geq d^{\frac{1}{\alpha -1}}[\gamma (\beta -1)t]^{\frac{1}{(\alpha-1)(\beta-1)}},
$$
which provides the lower estimate in \eqref{lower}.

It now remains to obtain a similar bound for a given $\frac{1}{4B_0}<\lambda <1$. Let us denote by $w(t,x)$ the solution of \eqref{eq} starting from a nonincreasing $w_0$ such that
 \begin{equation}\label{initial-data-w}
w_0(x)= \begin{cases} \frac{1}{4B_0} &\text{
if } x\leq -1
\\
0 &\text{ if  }x\geq 0.
\end{cases}
\end{equation}
It follows from  Proposition \ref{prop:level-set} $(ii)$  that there is a time $\tau_{\lambda}>0$  such that
\begin{equation}
\label{w-grand}
 w(\tau_{\lambda},x)>\lambda, \quad \forall x\leq 0.
 \end{equation}
 On the other hand, it follows from \eqref{u-grand} and  the definition \eqref{initial-data-w} that
 $$
 u(T,x)\ge w_0(x-x_{B_0}(T)),\quad \forall T\geq 0, \forall x\in \R,
 $$
 so that the comparison principle yields
 $$
 u(T+\tau,x)\geq w(\tau,x-x_{B_0}(T)),\quad \forall T\geq 0, \forall \tau \geq 0, \forall x\in \R.
 $$
  In view of \eqref{w-grand}, this implies that
 $$
 u(T+\tau_{\lambda},x)>\lambda,\quad \forall T\geq 0, \forall x\leq x_{B_0} (T).
 $$
 Hence, for $t\geq \tau _\lambda$,   the above implies  
 $$
 x_\lambda(t)\geq x_{B_0}(t-\tau _\lambda)=d^{\frac{1}{\alpha-1}} \left[B_0^{\beta-1}2^{ \beta -1}+\gamma (\beta -1)(t-\tau_{\lambda}) \right]^{\frac{1}{(\alpha-1)(\beta-1)}} \geq \underline C t^{\frac{1}{(\alpha-1)(\beta-1)}},
 $$
provided $t\geq T_\lambda '$, with $T_\lambda '>\tau _\lambda$ large enough. This concludes the proof of the lower estimate in \eqref{lower}. \qed

\medskip

\noindent \textbf{Acknowledgement.} 
J. Coville acknowledges support from the \lq\lq ANR JCJC'' project MODEVOL: ANR-13-JS01-0009 and  the ANR \lq\lq DEFI'' project NONLOCAL: ANR-14-CE25-0013.  

\bibliographystyle{siam}    
\bibliography{monostable}

\begin{thebibliography}{10}

\bibitem{Alfaro2015a}
{\sc M.~Alfaro}, {\em Slowing {A}llee effect vs. accelerating heavy tails in
  monostable reaction diffusion equations}, ArXiv e-prints,  (2015).

\bibitem{Alfaro2016}
\leavevmode\vrule height 2pt depth -1.6pt width 23pt, {\em Fujita blow up
  phenomena and hair trigger effect: the role of dispersal tails}, to appear in
  Ann. Inst. H. Poincar\'e Anal. Non Lin\'eaire,  (2017).

\bibitem{Allee1938}
{\sc W.~C. Allee}, {\em The Social Life of Animals}, Norton, 1938.

\bibitem{Aronson1978}
{\sc D.~G. Aronson and H.~F. Weinberger}, {\em Multidimensional nonlinear
  diffusion arising in population genetics}, Adv. in Math., 30 (1978),
  pp.~33--76.

\bibitem{Brunk1956}
{\sc H.~Brunk, G.~Ewing, and W.~Utz}, {\em Some {H}elly theorems for monotone
  functions}, Proceedings of the American Mathematical Society, 7 (1956),
  pp.~776--783.

\bibitem{Cabre2013}
{\sc X.~Cabr{\'e} and J.-M. Roquejoffre}, {\em The influence of fractional
  diffusion in {F}isher-{K}{P}{P} equations}, Communications in Mathematical
  Physics, 320 (2013), pp.~679--722.

\bibitem{Carr2004}
{\sc J.~Carr and A.~Chmaj}, {\em Uniqueness of travelling waves for nonlocal
  monostable equations}, Proc. Amer. Math. Soc., 132 (2004), pp.~2433--2439
  (electronic).

\bibitem{Chen1997}
{\sc X.~Chen}, {\em Existence, uniqueness, and asymptotic stability of
  traveling waves in nonlocal evolution equations}, Adv. Differential
  Equations, 2 (1997), pp.~125--160.

\bibitem{Coville2007b}
{\sc J.~Coville}, {\em Travelling fronts in asymmetric nonlocal reaction
  diffusion equations: the bistable and ignition cases}.
\newblock 43 pages, Preprint of CMM, May 2007.

\bibitem{Coville2008a}
{\sc J.~Coville, J.~Davila, and S.~Martinez}, {\em Nonlocal anisotropic
  dispersal with monostable nonlinearity}, J. Differential Equations, 244
  (2008), pp.~3080--3118.

\bibitem{Coville2007a}
{\sc J.~Coville and L.~Dupaigne}, {\em On a non-local equation arising in
  population dynamics}, Proc. Roy. Soc. Edinburgh Sect. A, 137 (2007),
  pp.~727--755.

\bibitem{Dennis1989}
{\sc B.~Dennis}, {\em Allee effects: population growth, critical density, and
  the chance of extinction}, Nat. Resour. Model., 3 (1989), pp.~481--538.

\bibitem{Fujita1966}
{\sc H.~Fujita}, {\em On the blowing up of solutions of the {C}auchy problem
  for {$u_{t}=\Delta u+u^{1+\alpha }$}}, J. Fac. Sci. Univ. Tokyo Sect. I, 13
  (1966), pp.~109--124 (1966).

\bibitem{Garnier2011}
{\sc J.~Garnier}, {\em Accelerating solutions in integro-differential
  equations}, SIAM J. Math. Anal., 43 (2011), pp.~1955--1974.

\bibitem{Gui2015}
{\sc C.~Gui and T.~Huan}, {\em Traveling wave solutions to some reaction
  diffusion equations with fractional laplacians}, Calculus of Variations and
  Partial Differential Equations, 54 (2015), pp.~251--273.

\bibitem{Hamel2010b}
{\sc F.~Hamel and L.~Roques}, {\em Fast propagation for {K}{P}{P} equations
  with slowly decaying initial conditions}, J. Diff. Equations, 249 (2010),
  pp.~1726--1745.

\bibitem{Kanel1961}
{\sc J.~I. Kanel}, {\em Certain problem of burning-theory equations}, Dokl.
  Akad. Nauk SSSR, 136 (1961), pp.~277--280.

\bibitem{King2003}
{\sc J.~R. King and P.~M. McCabe}, {\em On the {F}isher-{K}{P}{P} equation with
  fast nonlinear diffusion}, Proceedings of the Royal Society of London A:
  Mathematical, Physical and Engineering Sciences, 459 (2003), pp.~2529--2546.

\bibitem{Kramer2009}
{\sc A.~M. Kramer, B.~Dennis, A.~M. Liebhold, and J.~M. Drake}, {\em The
  evidence for {A}llee effects}, Popul. Ecol., 51 (2009), pp.~341--354.

\bibitem{Medlock2003}
{\sc J.~Medlock and M.~Kot}, {\em Spreading disease: integro-differential
  equations old and new}, Math. Biosci., 184 (2003), pp.~201--222.

\bibitem{Mellet2014}
{\sc A.~Mellet, J.-M. Roquejoffre, and Y.~Sire}, {\em Existence and asymptotics
  of fronts in non local combustion models}, Commun. Math. Sci., 12 (2014),
  pp.~1--11.

\bibitem{Schumacher1980}
{\sc K.~Schumacher}, {\em Travelling-front solutions for integro-differential
  equations. {I}}, J. Reine Angew. Math., 316 (1980), pp.~54--70.

\bibitem{Stan2014}
{\sc D.~Stan and J.~L. Vazquez}, {\em The {F}isher-{K}{P}{P} equation with
  nonlinear fractional diffusion}, SIAM Journal on Mathematical Analysis, 46
  (2014), pp.~3241--3276.

\bibitem{Stephens1999}
{\sc S.~W. J. F. R.~P. Stephens, P.A.}, {\em What is the {A}llee effect?},
  Oikos, 87 (1999), pp.~185--190.

\bibitem{Veit1996}
{\sc R.~R. Veit and M.~A. Lewis}, {\em Dispersal, population growth, and the
  {Allee} effect: dynamics of the house finch invasion of eastern {N}orth
  {A}merica}, Am. Nat., 148 (1996), pp.~255--274.

\bibitem{Weinberger1982}
{\sc H.~F. Weinberger}, {\em Long-time behavior of a class of biological
  models}, SIAM J. Math. Anal., 13 (1982), pp.~353--396.

\bibitem{Yagisita2009}
{\sc H.~Yagisita}, {\em Existence and nonexistence of travelling waves for a
  nonlocal monostable equation}, Publ. RIMS, Kyoto Univ., 45 (2009),
  pp.~925--953.

\bibitem{Zeldovich1948}
{\sc Y.~B. Zel'dovich}, {\em A theory of flame propagation}, Zhurnal
  Fizicheskoi Khimii, USSR, 22 (1948), pp.~27--49.

\bibitem{Zhang2012}
{\sc G.-B. Zhang, W.-T. Li, and Z.-C. Wang}, {\em Spreading speeds and
  traveling waves for nonlocal dispersal equations with degenerate monostable
  nonlinearity}, Journal of Differential Equations, 252 (2012), pp.~5096 --
  5124.

\bibitem{Zlatos2005}
{\sc A.~Zlato{\v{s}}}, {\em Quenching and propagation of combustion without
  ignition temperature cutoff}, Nonlinearity, 18 (2005), p.~1463.

\end{thebibliography}

\end{document}